\documentclass[a4paper,12pt]{amsart}

\usepackage[centertags]{amsmath}
\usepackage{amsfonts,amssymb,amstext,amsthm,newlfont,latexsym,stmaryrd}
\usepackage[left=3.0cm,right=3.0cm,top=3.7cm,bottom=3.7cm]{geometry}
\usepackage{tikz}
\usetikzlibrary{arrows,automata,positioning,fit,shapes}

\usepackage[T1]{fontenc}
\usepackage{lmodern}
\usepackage{microtype}
\usepackage[colorlinks=true,linkcolor=blue,citecolor=blue,urlcolor=blue]{hyperref}

\makeatletter
\@namedef{subjclassname@2020}{\textup{2020} Mathematics Subject Classification}
\makeatother

\theoremstyle{plain}
\newtheorem{theorem}{Theorem}[section]
\newtheorem{corollary}[theorem]{Corollary}
\newtheorem{lemma}[theorem]{Lemma}
\newtheorem{proposition}[theorem]{Proposition}
\theoremstyle{definition}
\newtheorem{definition}[theorem]{Definition}
\newtheorem{example}[theorem]{Example}
\theoremstyle{remark}
\newtheorem{remark}[theorem]{Remark}
\theoremstyle{definition}
\newtheorem*{ack*}{Acknowledgement}

\newcommand{\cB}{\mathcal B}
\newcommand{\cF}{\mathcal F}
\newcommand{\cP}{\mathcal P}
\newcommand{\cA}{\mathcal A}
\newcommand{\cM}{\mathcal M}
\newcommand{\sX}{\mathsf X}
\newcommand{\bN}{\mathbb N}
\newcommand{\bZ}{\mathbb Z}

\title[Bi-balanced shifts and variable specification]
      {Bi-balanced shifts and variable specification}
\author[S. Hong]{Soonjo Hong} \address{Hongik University \\
  2639, Sejong-ro, Jochiwon-eup 
  \\
  Sejong\\
  South Korea}
\email{hsoonjo@hongik.ac.kr}
\author[M. Kim]{Minkyu Kim} \address{Department of Mathematics\\
  Ajou University \\
  16499,
  Suwon\\
  South Korea}  
\email{angellucifer@ajou.ac.kr}
\date{}
\subjclass[2020]{Primary 37B10, Secondary 37A25}
\keywords{balancedness, variable specification, intrinsic ergodicity}

\begin{document}

\begin{abstract}
Examples of one-sided balanced shifts which do not have variable specification are provided.
On the other hand, under synchronization condition, bi-balancedness is proven to be equivalent to variable specification.
\end{abstract}

\maketitle

\section{Introduction}

Baker and Ghenciu introduced (right) balancedness of a shift space as the existence of a uniform lower bound on the ratio of the number of followers to that of all words of a fixed length. \cite{BG15}.
Applying the property to predecessors we define left-balancedness, and in turn, bi-balancedness: it is just right- and left-balancedness at once.

It was shown that right-balancedness implies uniform exponential control of word growth \cite{BG15}. Moreover, they showed that balancedness is equivalent to the existence of a (not necessarily $\sigma$-invariant) Gibbs measure. Kim strengthened the conclusion by showing that, for a continuous potential $f$,  $f$-bi-balancedness is equivalent to the existence of an invariant Gibbs measure for $f$ \cite{KimMK24}.

On the other hand, specification property introduced by Bowen \cite{Bow71} expresses the possibility of joining finite pieces of orbits with a uniform gap.
In symbolic dynamics the same idea has a particularly concrete form:
any two admissible words can be joined by
a bridge of uniform length.

A survey \cite{KLO16} of Kwietniak, \L{}\k{a}cka and Oprocha introduces a broad panorama of specification-like properties and their dynamical consequences.
They typically force topological mixing and, in their periodic form, abundant periodic points and periodic measures.
They also give a rich structure to the invariant measure simplex:
ergodic measures are dense and invariant measures are approximated by periodic-orbit measures.
Among those consequences, one of the strongest results is intrinsic ergodicity \cite{Bow71,CliPav19}, that is, the uniqueness of measure of maximal entropy.

Comparing specification and balancedness, we see that in a sense bi-balancedness is a quantitative version of specification:
in a bi-balanced shift space we are able to join two sizable chunks of all admissible words freely.
Here a counting lower bound requirement replaces a geometric gap requirement for gluing.
Specifically, we see that, under synchronization condition, bi-balancedness is equivalent to variable specification.
The main result of the present paper is the following:

\begin{theorem}
  Let $X$ be an irreducible shift space.
  Then it is synchronized and bi-balanced if and only if it has variable specification.
\end{theorem}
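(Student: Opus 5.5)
We fix the definitions we will use. Write $\cB_n(X)$ for the set of words of length $n$ in $X$. $X$ is \emph{right-balanced} if there is $C>0$ such that for every word $w$ and every $n$, the number of words $u\in\cB_n(X)$ with $wu\in\cB(X)$ is at least $C|\cB_n(X)|$. \emph{Left-balancedness} is the same condition for words $u$ with $uw\in\cB(X)$. $X$ has \emph{variable specification} if there is $N$ such that for all $u,v\in\cB(X)$ there is a word $w$ with $|w|\le N$ and $uwv\in\cB(X)$. $X$ is \emph{synchronized} if it has a synchronizing word $m$, i.e.\ $um, mv\in\cB(X)$ imply $umv\in\cB(X)$.

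\emph{Variable specification implies synchronized and bi-balanced.} Irreducibility is automatic. For balancedness, fix $w$ and $n$. Every $u\in\cB_n(X)$ admits a bridge $s(u)$ of length at most $N$ with $w\,s(u)\,u\in\cB(X)$. Thus $u$ is a suffix of a follower of $w$ whose length lies in $[n,n+N]$. Conversely, each follower of length $n+k$ arises from at most one such $u$. We therefore need to compare the counts $|\cB_{n+k}(X)|$ and $|\cB_n(X)|$ for $k\le N$. Specification gives $|\cB_{n+k}|\le |\cB_n||\cB_k|\le |\cA|^N|\cB_n|$, and also $|\cB_n|\le |\cB_{n+k}|$ by extendability. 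We project each follower of length between $n$ and $n+N$ onto its length-$n$ prefix; this yields an honest follower of length $n$. Since at most $(N+1)|\cA|^N$ suffix patterns can share one prefix, the followers of $w$ of length $n$ number at least $|\cB_n|/((N+1)|\cA|^{2N})$. The left version is symmetric.

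For synchronization, pick a word $m$ and extend it. The standard argument (as in Thompson's and Jung's results relating specification to synchronization) proceeds as follows. Among words, choose $m$ minimizing the number $F(m)$ of ``follower-set types'' of bounded-length extensions. Bridging lets us reduce to a single type, which yields a synchronizing word. Concretely, we consider the finite family $\{F(x)\cap\cB_{\le N'}\}$ and iteratively append bridges to shrink the follower set to a minimal one. This shows $F(um)=F(m)$ for all $u$ with $um\in\cB(X)$. I expect this to be the main technical point of this direction: the bound $N$ must be used to show that the follower set stabilizes.

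\emph{Synchronized and bi-balanced implies variable specification.} This is the harder direction, and the counting is where the proof lives. Let $m$ be synchronizing, and let $u,v$ be arbitrary words. We aim to connect $u$ to $v$ with a gap of bounded length. Consider the set $R$ of words $x\in\cB_n(X)$ that can follow $u$. By right balancedness, $|R|\ge C|\cB_n|$. Similarly, the set $L$ of $y\in\cB_n$ that can precede $m$ satisfies $|L|\ge C|\cB_n|$.

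The key step is to show that, for a suitable fixed $n$ and bounded gap $k$, some $x\in R$ is such that $x$ followed by a word of length at most $k$, then $m$, is admissible. Equivalently, we want $R$ to meet a set of words that lead into $m$ quickly. Irreducibility gives, for each word, some path to $m$, but not a uniform one. To get uniformity we use counting. Let $G_k$ be the set of words of length $n$ that reach $m$ within gap $k$. Since $m$ is synchronizing and $X$ is irreducible, the union of the $G_k$ is all of $\cB_n$. Hence for each $n$ there is $k(n)$ with $|G_{k(n)}|>(1-C)|\cB_n|$, which forces $G_{k(n)}\cap R\neq\emptyset$. We get the same on the left side for words reaching $v$ from $m$.

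The obstacle is that $k(n)$ depends on $n$ while $n$ is fixed arbitrarily; we need only one $n$, say $n=1$. The remaining difficulty is to ensure that the bridge is independent of $u$. The balancedness constant $C$ is uniform in $u$, and $G_k$ does not depend on $u$, so fixing $n$ and $k=k(n)$ works. Then $u\,x\,s\,m\,s'\,y\,v$ is glued through $m$ using synchronization, with total gap bounded by $2(n+k+|m|)$. This gives variable specification.
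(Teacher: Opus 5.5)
Your forward direction (synchronized and bi-balanced $\Rightarrow$ variable specification) has a genuine gap at the gluing step. Choosing $x\in R\cap G_k$ gives $ux\in\cB(X)$ and $xsm\in\cB(X)$ as two separate facts. It does \emph{not} give $uxsm\in\cB(X)$: admissibility of overlapping words is not transitive unless the overlap $x$ is itself synchronizing. Synchronization of $m$ only lets you join $u\alpha m$ and $m\beta v$ once each of them is known to be admissible as a single word. Your construction never produces an admissible word that starts with $u$ and contains $m$.

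Balancedness also does no work in your argument. Since $\cB_n(X)$ is finite, $G_{k(n)}=\cB_n(X)$ once $k(n)$ is large, so $R\cap G_{k(n)}\neq\emptyset$ holds trivially, as your choice $n=1$ makes clear. If the argument were valid, it would show that every irreducible synchronized shift has variable specification. That is false: an $S$-gap shift with $\sup G_S=\infty$ is synchronized (the word $1$ is synchronizing) but lacks variable specification.

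The missing idea is to find, for one fixed $N$, a follower of $u$ of length $N$ that itself \emph{contains} $m$, so that $u\alpha m\in\cB(X)$ outright. Symmetrically, you need a predecessor $m\beta$ of $v$ of length at most $N$. Then $u\alpha m\beta v\in\cB(X)$ with a bridge of length at most $2N$. By balancedness, this reduces to showing that the words of length $N$ avoiding $m$ form less than a $C$-fraction of $\cB_N(X)$. Equivalently, the subshift of points avoiding $m$ must have entropy strictly below $h(X)$.

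This entropy minimality is the real content of the theorem, and irreducibility cannot supply it. The paper obtains it in three steps:
\begin{enumerate}
\item Kim's theorem: a bi-balanced shift carries an invariant Gibbs measure, and this measure is a fully supported MME.
\item Thomsen's result: a synchronized shift has at most one fully supported MME. This forces the Gibbs measure to be ergodic.
\item The Gibbs inequality gives any proper subshift of full entropy a measure strictly between $0$ and $1$, which contradicts ergodicity.
\end{enumerate}

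In the converse direction, your balancedness count is fine. The fibre bound $(N+1)|\cA|^N$ for $u\mapsto$ (length-$n$ prefix of $s(u)u$) suffices on its own, without comparing $|\cB_{n+k}(X)|$ with $|\cB_n(X)|$. Your synchronization sketch, however, is not an argument as written. The paper simply cites Jung for that implication, and you should either do the same or supply an actual proof.
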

Its proof is in section~\ref{section::equivalence}.
Via Kim's results \cite{KimMK24}, this implies that under synchronization condition the existence of invariant Gibbs measures is equivalent to variable specification and implies intrinsic ergodicity.
We will also see that variable specification induces intrinsic ergodicity.

\section{Preliminaries}

We assume basic knowledge of symbolic dynamics here.
More exposition about symbolic dynamics can be found in \cite{LinM95}.

Let $X\subset\cA^{\mathbb Z}$ be a shift space over a finite alphabet $\cA$ with the left shift map $\sigma$. 
Its language and the set of blocks of length $n$ are denoted by
\[
  \cB(X)=\bigcup_{n\geq0}\cB_n(X),
  \qquad \cB_n(X)=\cB(X)\cap\cA^n.
\]
where $\cB_0(X)$ is a singleton $\{\varepsilon\}$ consisting of the \emph{empty word} $\varepsilon$ of length $0$.
We say that $X$ is \emph{irreducible} if for any ordered pair $u,v$ of words in $\cB(X)$ there is a word $w$ in $\cB(X)$ with $uwv$ in $\cB(X)$.
Here $w$ is called a \emph{bridge} from $u$ to $v$.
In this paper, we will always assume that $X$ is irreducible.

The (\emph{topological}) \emph{entropy} of $X$ is defined to be
\[
h(X)=\lim_{n\to\infty}\frac1n\log |\cB_n(X)|.
\]
It measures the growth rate of the complexity of $\cB(X)$.

For $u$ in $\cB(X)$, let
\[ \cF(u,X)=\{w\in\cB_(X):uw\in\cB(X)\}\text{ and }\cP(u,X)=\{w\in\cB(X):wu\in\cB(X)\} \]
be the \emph{follower} and the \emph{predecessor} sets, respectively.
For $n\ge0$, let
\[ \cF_n(u,X)=\cF(u,X)\bigcap\cA^n,\text{ and }\cP_n(u,X)=\cP(u,X)\bigcap\cA^n. \]
We call $X$ \emph{right-balanced} if there is $c>0$ such that for all $u$ in $\cB(X),\ n\geq1$ we have
\[
  \frac{|\cF_n(u,X)|}{|\cB_n(X)|}\geq c,
\]
and \emph{left-balanced} if there is $c>0$ such that for all $u$ in $\cB(X),\ n\geq1$ we have
\[
\frac{|\cP_n(u,X)|}{|\cB_n(X)|}\geq c.
\]
It is \emph{bi-balanced} if it is both right- and left-balanced at the same time.

Let $C$ be a set of words over $\cA$.
Then a \emph{coded system} $\sX_C$ generated by $C$ is a shift space whose points are all bi-infinite concatenations of words from $C$.
A shift space $X$ is coded if $X=\sX_C$ for some $C\subset\cA^*$.
Denote by $C^*$ the set of all finite concatenations of words from $C$ including the empty word $\varepsilon$.
Every word of a coded system $\sX_C$ is a subword of some word in $C^*$.

A word $w$ in $\cB(X)$ is \emph{synchronizing} if whenever $uw,wv$ are in $\cB(X)$ we have $uwv\in\cB(X)$ as well.
An irreducible shift space containing such a word is called \emph{synchronized}.
Every word containing a synchronizing word as a subword is synchronizing as well and every synchronized shift is a coded system.

Finally,
$X$ has \emph{specification} if there is $N\geq0$ such that for every $u,v$ in $\cB(X)$ there is a bridge $w$ of length $N$ from $u$ to $v$,
and \emph{variable specification} if there is $N\geq0$ such that for every $u,v$ in $\cB(X)$ there is a bridge $w$ of length $\leq N$ from $u$ to $v$.
The notion of variable specification was introduced by Jung \cite{Jun11} under the name of \emph{almost specification}.
We will use the former term along with \cite{KLO16}.

\section{Balancedness and variable specification}

We show that one-sided balancedness alone cannot induce variable specification through concrete examples.

\begin{definition}
  A shift space $X$ is called \emph{boundedly supermultiplicative} (BSM for short) if there is a constant $c\ge1$ such that we have
  \[ |\cB_m(X)|\,|\cB_n(X)|\le c|\cB_{m+n}(X)| \]
  for all $m,n\in\mathbb{N}$.
\end{definition}

\begin{proposition}\cite{BG15}\label{prop::BSM_entropy_condition}
  A shift space $X$ is BSM if and only if there is a constant $c\ge1$ such that we have
  \[ \frac1c\le\frac{\exp(nh(X))}{|\cB_n(X)|}\le1 \]
  for all $n\in\mathbb{N}$.
\end{proposition}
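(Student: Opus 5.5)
The plan is to show each direction separately, using only the definition of BSM and two standard facts about the sequence $a_n=\log|\cB_n(X)|$. The first fact is subadditivity: every word of length $m+n$ splits into a word of length $m$ followed by one of length $n$, so $|\cB_{m+n}(X)|\le|\cB_m(X)|\,|\cB_n(X)|$. Fekete's lemma then gives $h(X)=\inf_n a_n/n$. Hence $nh(X)\le a_n$, that is, $\exp(nh(X))\le|\cB_n(X)|$ for every $n$. This upper inequality holds for every shift space and needs no hypothesis.

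For the forward direction, assume $X$ is BSM with constant $c\ge1$. Set $b_n=a_n-\log c$. BSM reads $a_m+a_n\le\log c+a_{m+n}$, which is exactly
\[ b_m+b_n\le b_{m+n}, \]
so $(b_n)$ is superadditive. Applying Fekete's lemma to $-b_n$ gives $\lim_n b_n/n=\sup_n b_n/n$. The left side is $\lim_n a_n/n=h(X)$, because $\log c/n\to0$. Therefore $b_n\le nh(X)$ for all $n$, i.e.\ $|\cB_n(X)|\le c\exp(nh(X))$. Together with the universal bound from the first paragraph this yields
\[ \frac1c\le\frac{\exp(nh(X))}{|\cB_n(X)|}\le1 . \]

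For the converse, assume $|\cB_n(X)|\le c\exp(nh(X))$ for all $n$, and recall that $\exp(kh(X))\le|\cB_k(X)|$ always holds. Then
\[ |\cB_m(X)|\,|\cB_n(X)|\le c^2\exp\bigl((m+n)h(X)\bigr)\le c^2|\cB_{m+n}(X)|, \]
so $X$ is BSM with constant $c^2$.

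None of these steps is a genuine obstacle. The only point needing care is to apply Fekete's lemma in its superadditive form to the shifted sequence $b_n$ rather than to $a_n$ itself, so that the constant $\log c$ is absorbed and does not accumulate.
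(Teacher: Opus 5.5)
Your proof is correct. Subadditivity plus Fekete gives $e^{nh(X)}\le|\cB_n(X)|$ for every shift. Fekete applied to the superadditive sequence $\log|\cB_n(X)|-\log c$ gives $|\cB_n(X)|\le c\,e^{nh(X)}$. The converse then follows with constant $c^2$. All of this tacitly assumes $X\neq\emptyset$, which the statement presupposes.

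The paper gives no proof of its own here and simply cites \cite{BG15}. Your argument is the standard Fekete-type proof of this fact, so there is no alternative route in the paper to compare against.
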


Among BSM shifts, zero-entropy ones have simple structures.

\begin{proposition}\cite{KimThesis24}\label{prop::BSM_zero_entropy_condition}
  A zero-entropy shift space $X$ is BSM if and only if it consists of finitely many periodic points.
  Furthermore, it is irreducible and BSM if and only if it consists of a single periodic orbit.
\end{proposition}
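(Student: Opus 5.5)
We reduce the zero-entropy case to a bounded-complexity statement via Proposition~\ref{prop::BSM_entropy_condition}, and then use a Morse--Hedlund type argument to show that bounded complexity forces $X$ to be finite.

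\emph{Reduction.} Since $h(X)=0$, Proposition~\ref{prop::BSM_entropy_condition} says that $X$ is BSM if and only if there is $c\ge1$ with $1\le|\cB_n(X)|\le c$ for all $n$. The lower bound holds for every nonempty $X$. So for zero-entropy $X$, BSM is equivalent to $\sup_n|\cB_n(X)|<\infty$. In particular, if $X$ is a finite union of periodic orbits, then $|\cB_n(X)|$ is at most the total number of points, so $X$ is BSM.

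\emph{Converse.} Assume $|\cB_n(X)|\le c$ for all $n$. The sequence $n\mapsto|\cB_n(X)|$ is nondecreasing, because every word of $X$ extends to the right inside $X$. Being integer-valued and bounded, it is eventually constant, so there is $n_0$ with $|\cB_{n_0+1}(X)|=|\cB_{n_0}(X)|$. Consider the restriction maps $\cB_{n_0+1}(X)\to\cB_{n_0}(X)$ that drop the last symbol, respectively the first symbol. Both are surjective, since each word extends on both sides inside $X$. As the two sets have equal size, both maps are bijections. Hence every $n_0$-block has a unique right extension and a unique left extension.

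By induction, a point $x\in X$ is then determined by its block $x_{[0,n_0)}$. Therefore $|X|\le|\cB_{n_0}(X)|\le c$. A finite $\sigma$-invariant set consists of periodic points, since $\sigma$ is a bijection of a finite set. This proves the first claim.

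\emph{Irreducible case.} A single periodic orbit is clearly irreducible, and it is BSM by the first part. Conversely, suppose $X$ is irreducible and BSM with $h(X)=0$. By the first part, $X=O_1\cup\dots\cup O_k$ is a union of distinct periodic orbits, and by the argument above any block of length $n_0$ occurring in $X$ determines the point of $X$ containing it at a given position. Suppose $k\ge2$. Take $u$ an $n_0$-block of a point in $O_1$ and $v$ an $n_0$-block of a point in $O_2$. Irreducibility gives $w$ with $uwv\in\cB(X)$, so $uwv$ occurs in some $x\in X$. The occurrence of $u$ forces $x$ to be a shift of the point of $O_1$ containing $u$, so $x\in O_1$. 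The occurrence of $v$ similarly forces $x\in O_2$. This contradicts $O_1\cap O_2=\emptyset$.

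The only nonroutine step is deriving finiteness of $X$ from bounded complexity. The key point there is that equality $|\cB_{n_0+1}(X)|=|\cB_{n_0}(X)|$ yields unique extensions on both sides, which relies on extendability in a two-sided shift space. Once that is established, the rest is bookkeeping.
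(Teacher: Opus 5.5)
Your proof is correct and follows the paper's route: you use Proposition~\ref{prop::BSM_entropy_condition} to reduce the zero-entropy case to the bound $|\cB_n(X)|\le c$, and then conclude that $X$ is a finite union of periodic orbits. The paper only asserts this last step and calls the remaining implications trivial. You instead justify it with the unique-extension (Morse--Hedlund type) argument giving $|X|\le c$, and you check the irreducible case explicitly.
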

\begin{proof}
  Let $X$ be a BSM shift space with $h(X)=0$.
  Then by Proposition~\ref{prop::BSM_entropy_condition} we have a constant $c\ge1$ such that for all $n$ in $\bN$ we have
  \[ |\cB_n(X)|\le c. \]
  So $X$ consists of periodic points.
  The remaining implications are trivial.
\end{proof}

It is known that a shift space with variable specification is right-balanced and that a right-balanced shift space is BSM \cite{BG15}.
Symmetrically, a shift with variable specification is left-balanced and a left-balanced shift space is BSM.
Immediately, a shift space with variable specification is bi-balanced.
Furthermore, by Propositions~\ref{prop::BSM_entropy_condition} and~\ref{prop::BSM_zero_entropy_condition}, the following are easily shown to be equivalent for a zero-entropy shift space $X$:
\begin{enumerate}
  \item $X$ has variable specification.
  \item $X$ is bi-balanced.
  \item $X$ is one-sided balanced.
  \item $X$ is BSM.
\end{enumerate}

$S$-gap shifts are another family of shift spaces where bi-balancedness and one-sided balancedness are equivalent.
An $S$-\emph{gap shift} $\sX(S)$ is defined as a coded system $\sX_C$ where $C=\{10^s\mid s\in S\},S\subset\bN,S\ne\emptyset$.
Every $S$-gap shift is BSM \cite{BG15}.

\begin{proposition}\cite{KimThesis24}\label{prop::S_gap_shift}
  Let $S=\{s_1,s_2,\cdots\}$ be a possibly finite subset of natural numbers.
  Then the following are equivalent for an $S$-gap shift $X=\sX(S)$:
  \begin{enumerate}
    \item $G_S=\{s_n-s_{n-1}\}$ is bounded.
    \item $X$ has variable specification.
    \item $X$ is bi-balanced.
    \item $X$ is one-sided balanced.
  \end{enumerate}
\end{proposition}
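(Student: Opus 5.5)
We close the cycle of implications. The implication (2)$\Rightarrow$(3) is the known fact, recalled above, that a shift with variable specification is bi-balanced, and (3)$\Rightarrow$(4) is trivial. It remains to prove (1)$\Rightarrow$(2) and (4)$\Rightarrow$(1).

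\emph{Reducing left to right.} $X=\sX(S)$ is invariant under word reversal. The reverse of a concatenation $10^{t_1}10^{t_2}\cdots$ is a concatenation of blocks $0^{t}1$, and regrouping these gives again points of $\sX(S)$. Reversal therefore exchanges $\cF_n(u,X)$ with $\cP_n(\bar u,X)$, where $\bar u$ is the reverse of $u$. Hence $X$ is left-balanced if and only if it is right-balanced, and in (4)$\Rightarrow$(1) we may assume $X$ is right-balanced.

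\emph{(1)$\Rightarrow$(2).} Let $g$ bound $G_S$, with $g=\max S$ if $S$ is finite, and set $N=2(g+s_1)+1$. Take $u,v\in\cB(X)$. Write $u=u'0^a$ with $a$ maximal, so that $u'$ is empty or ends in $1$, and write $v=0^bv'$ with $b$ maximal.
\begin{itemize}
\item Since $0^a$ occurs in $X$, there is some element of $S$ that is $\ge a$. Bounded gaps then give $s\in S$ with $a\le s\le a+\max(g,s_1)$.
\item Likewise there is $s'\in S$ with $b\le s'\le b+\max(g,s_1)$.
\end{itemize}
Put $w=0^{s-a}10^{s'-b}$. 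Then the run of zeros closing $u$ is completed to $0^s$, a fresh $1$ is inserted, and the run opening $v$ is lengthened to $0^{s'}$ preceded by that $1$. Thus $uwv$ is a subword of a concatenation of words in $C$, and $|w|\le N$.

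The step needing the most care is the degenerate cases: $u$ or $v$ consisting only of zeros, $u$ or $v$ empty, and $S$ finite. In each of these the same bridge works, because an admissible run $0^a$ always satisfies $a\le\sup S$. Any of the fragments $0^{s-a}$, $0^{s'-b}$ may be empty, and this causes no problem.

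\emph{(4)$\Rightarrow$(1).} Suppose $G_S$ is unbounded; then $S$ is infinite. Fix $k$ and put $u=10^{s_{k-1}+1}$, which is admissible. In any extension of $u$, the current run of zeros must continue until its length lies in $S$, and the smallest possible length is $s_k$. So for $m=s_k-s_{k-1}-1$ every word in $\cF_m(u,X)$ is $0^m$, which gives $|\cF_m(u,X)|=1$.

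On the other hand, $X$ is infinite, and an infinite shift satisfies $|\cB_m(X)|\ge m+1$. Hence
\[
\frac{|\cF_m(u,X)|}{|\cB_m(X)|}\le\frac1{m+1}.
\]
Since the gaps $s_k-s_{k-1}$ are unbounded, $m$ can be taken arbitrarily large, so this ratio is not bounded below and $X$ is not right-balanced. By the reversal symmetry, $X$ is not left-balanced either, which contradicts (4).
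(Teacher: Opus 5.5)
Your proof is correct and has the same skeleton as the paper's. The decisive step (4)$\Rightarrow$(1) rests on the paper's own observation that after $10^{s_{k-1}+1}$ the next $s_k-s_{k-1}-1$ symbols are forced to be $0$. The remaining differences are minor: you prove (1)$\Rightarrow$(2) with the explicit bridge $0^{s-a}10^{s'-b}$ where the paper cites Jung, you get $|\cB_m(X)|\to\infty$ from the elementary bound $|\cB_m(X)|\ge m+1$ instead of positive entropy, and you treat left-balancedness by reversal symmetry instead of the mirrored word $0^{s+1}1$.
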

\begin{proof}
  The equivalence between the first two statements is shown in \cite{Jun11}.
  
  When $\sup G_S=\infty$, for each $k>0$ we have $n_k$ with $s_{n_k+1}-s_{n_k}>k$.
  Clearly $10^{s_{n_k}+1}$ and $0^{s_{n_k}+1}1$ have exactly one follower and exactly one predecessor of length $k$, respectively, both of which are just $0^k$.
  However, $S$ has infinitely many elements,
  so it is easy to verify that $X$ has positive entropy (see Proposition 2.1 of \cite{BG15}).
  Then $X$ is neither right nor left-balanced since both
  \[ \frac{|\cF_{k}(10^{s_{n_k}+1},X)|}{|\cB_k(X)|}=\frac{1}{|\cB_k(X)|}\text{ and }\frac{|\cP_{k}(0^{s_{n_k}+1}1,X)|}{|\cB_k(X)|}=\frac{1}{|\cB_k(X)|}\]
  go to 0.
  So, if $X$ is one-sided balanced, then $G_S$ is bounded, then $X$ has variable specification, then $X$ is bi-balanced.
\end{proof}

Until now, for zero-entropy shifts or $S$-gap shifts, bi-balanced shifts and one-sided balanced ones have coincided.
However, this is not true in general.
As Example~\ref{eg::right_not_left_balanced} and Theorem~\ref{thm::one-sided_not_bi-balanced} suggest, one-sided balancedness does not imply bi-balancedness, let alone variable specification.

\begin{figure}[ht]
  \centering
  \begin{tikzpicture}[
    vertex/.style={
      circle,
      draw,
      inner sep=0pt,
      minimum size=5.5mm
    },
    edge/.style={->,thin}
    ]
    \def\N{4}
    
    \node[vertex] (p0) at (0,0) {};
    
    \foreach \i in {1,...,\N} {
      \pgfmathsetmacro{\x}{0.85+2*(\i-1)}
      \node[vertex] (q\i) at (\x,1.35) {};
    }
    
    \foreach \i in {1,...,3} {
      \pgfmathsetmacro{\x}{2*\i}
      \node[vertex] (p\i) at (\x,0) {};
    }
    
    \draw[edge]
    (p0) edge[loop left] node[left] {$c$} (p0);
    
    \draw[edge]
    (p0) to[bend left=34] node[left] {$a$} (q1);
    \draw[edge]
    (q1) to node[right] {$b$} (p0);
    
    \foreach \i in {1,...,3} {
      \pgfmathtruncatemacro{\j}{\i+1}
      \draw[edge]
      (q\i) -- node[above] {$a$} (q\j);
    }
    \draw[edge]
    (q4) -- node[above] {$a$} ++(1.85,0);
    
    \foreach \i in {2,...,4} {
      \pgfmathtruncatemacro{\j}{\i-1}
      \draw[edge]
      (q\i) -- node[left] {$b$} (p\j);
    }
    
    \foreach \i in {1,...,3} {
      \pgfmathtruncatemacro{\j}{\i-1}
      \draw[edge]
      (p\i) -- node[above] {$b$} (p\j);
    }
    \draw[edge]
    (7.85,0) -- node[above] {$b$} (p3);
    
    \foreach \i in {1,...,3} {
      \pgfmathsetmacro{\depth}{0.28+0.13*\i}
      \draw[edge]
      (p\i)
      .. controls +(0,-\depth)
      and +(1.00,-\depth) ..
      node[pos=.52,below] {$c$}
      (p0);
    }
  \end{tikzpicture}
  \caption{A labeled graph for
    Example~\ref{eg::right_not_left_balanced}.}
  \label{fig::right_not_left_balanced}
\end{figure}

\begin{example}\label{eg::right_not_left_balanced}
  Let
  \[ C=\{a^nb^n,a^lb^mc,c\mid l,m,n\in\mathbb{N},l>m\} \]
  and $X=\mathsf{X}_C$.
  It is a coded system presented by the graph described in Figure~\ref{fig::right_not_left_balanced} and clearly has positive entropy, say $\log\lambda,\lambda>1$.
  We will show that $X$ is right- but not left-balanced.
  Immediately, it cannot satisfy variable specification as such shifts must be bi-balanced.
  
  Note that $\cP_n(ab^{n+1},X)$ is a singleton $\{a^n\}$.
  Then
  \[ \frac{|\cP_n(ab^{n+1},X)|}{|\cB_n(X)|}\le\frac{1}{\lambda^{n}}\to0 \]
  as $n\to\infty$,
  so $X$ is not left-balanced.
  
  To see that $X$ is right-balanced, we will estimate $|\cF_n(u,X)|$ and $|\cB_n(X)|$ for all $n$ in $\bN$ and $u$ in $\cB(X)$.
  Every edge in Figure~\ref{fig::right_not_left_balanced} has exactly two outgoing edges of distinct labels,
  so every word in $X$ has at least $2^n$ followers of length $n$.
  It follows that for every word $u$ of $X$ we have
  \[ |\cF_n(u,X)|\ge2^n. \]
  
  Now, we estimate the cardinality of all words of length $n$.
  First, every word containing $c$ or $ba$ has a single terminal vertex in the graph.
  Hence, it has exactly $2^n$ followers of length $n$.
  Next, since $b$ is followed by $a,b$ and $c$,
  we have a disjoint union
  \[ \cF_n(b,X)=\bigcup_{s\in\{a,b,c\}}s\cF_{n-1}(bs,X) \]
  where $s\cF_{n-1}(bs,X)=\{sw\mid w\in\cF_{n-1}(bs,X)\}$ for $s=a,b$ or $c$.
  Then
  \[\begin{split}
    |\cF_n(b,X)|=&|\cF_{n-1}(ba,X)|+|\cF_{n-1}(bb,X)|+|\cF_{n-1}(bc,X)|\\
    =&2^{n-1}+|\cF_{n-2}(bba,X)|+|\cF_{n-2}(bbb,X)|+|\cF_{n-2}(bbc,X)|+2^{n-1}\\
    =&2\cdot2^{n-1}+2^{n-2}+|\cF_{n-3}(b^3a,X)|+|\cF_{n-3}(b^4,X)|+|\cF_{n-3}(b^3c,X)|+2^{n-2}\\
    &\vdots\\
    =&2(2^{n-1}+2^{n-2}+\cdots+2)+|\cF_1(b^n,X)|\\
    =&2^{n}+2^{n-1}+\cdots+4+|\{a,b,c\}|\\
    =&2^{n+1}-1.
  \end{split}\]
  Also, $a$ is followed by $a$ and $b$ but not by $c$.
  We have
  \[\begin{split}
    |\cF_n(a,X)|=&|\cF_{n-1}(aa,X)|+|\cF_{n-1}(ab,X)|\\
    =&|\cF_{n-2}(aaa,X)|+|\cF_{n-2}(aab,X)|+|\cF_{n-1}(ab,X)|\\
    &\vdots\\
    =&|\cF_{1}(a^n,X)|+\sum_{k=1}^{n-1}|\cF_{n-k}(a^kb,X)|\\
    \le&|\{a,b\}|+\sum_{k=1}^{n-1}|\cF_{n-k}(b,X)|\\
    =&2+\sum_{k=1}^{n-1}(2^{n-k+1}-1)\\
    =&2^{n+1}-n-1.
  \end{split}\]
Finally,
\[\begin{split}
  |\cB_n(X)|=&|\cF_{n-1}(a,X)|+|\cF_{n-1}(b,X)|+|\cF_{n-1}(c,X)|\\
  \le&2^n-n+2^n-1+2^{n-1}\\
  \le&5\cdot2^{n-1}.
\end{split}\]
Therefore, we have
\[ \frac{|\cF_n(u,X)|}{|\cB_n(X)|}\ge\frac{2^n}{5\cdot2^{n-1}}=\frac25 \]
showing that $X$ is right-balanced.
\end{example}

By Theorem~\ref{thm::one-sided_not_bi-balanced}, we can construct more examples of one-sided but not bi-balanced shifts.

\begin{lemma}\cite{Jun11,Tho04}\label{lmm::one-sided_not_bi-balanced}
  Let $X\subset\cA^\bZ$ be a shift space with variable specification.
  Then there exists $p$ in $\bN$ and closed subsets $X_0,X_1,\cdots,X_{p-1}$ of $X$ such that
  \begin{enumerate}
    \item $X=\bigcup_{j=0}^{p-1}X_j$.
    \item $\sigma(X_j)=X_{j-1\mod{p}}$.
    \item $X_j\cap X_k,j\ne k$ has empty interior.
    \item $(X_j^p,\sigma),j=0,1,\cdots,p-1$, have specification where
    \[ X_j^p=\{\bar x\in(\cA^p)^\bZ\mid\text{there is }x\in X_j\text{ with }\bar x_i=x_{pi}x_{pi+1}\cdots x_{p(i+1)-1},i\in\bZ\}. \]
  \end{enumerate}
\end{lemma}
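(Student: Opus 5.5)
Throughout, $X$ is irreducible (as assumed in the paper) with variable specification constant $N$. The plan is the classical cyclic decomposition: take a period $p$, split $\cB(X)$ into $p$ "phase classes" of words, and show that the $p$-block presentation restricted to one class has specification with a fixed gap.

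\textbf{Choosing the period.} For a word $u\in\cB(X)$ let $R(u)=\{|w|:uwu\in\cB(X)\}$ be its set of return lengths. I would first show that $g(u)=\gcd\{r+|u|: r\in R(u)\}$ is bounded in terms of $N$. The reason is that variable specification gives a bridge from $u$ to $u$ of length at most $N$, and also a chain $u\,w_1\,u\,w_2\,u$ with $|w_i|\le N$. Then I take $p$ to be the minimum of $g(u)$ over all words, attained at some $u_0$. Replacing $u_0$ by a longer extension, and using that the gcd can only shrink or stay equal under the variable-specification concatenations, I would show that for every sufficiently long word the return times are all congruent modulo $p$.

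\textbf{Phases and the sets $X_j$.} Fix a reference word $u_0$. To each $v\in\cB(X)$ I assign the phase
\[
\phi(v)\equiv |u_0|+|w| \pmod p,
\]
where $w$ is any bridge with $u_0wv\in\cB(X)$. I expect well-definedness modulo $p$ to be the \emph{main obstacle}. To prove it, I would close the loop back to $u_0$ with a bridge $w'$ of length at most $N$. The total length $|u_0|+|w|+|v|+|w'|$ is then a return length of $u_0$, hence divisible by $p$. So two choices $w_1,w_2$ differ by a multiple of $p$ once the return bridge is held fixed. The delicate point is that the return bridge $w'$ must not depend on which $w_i$ was used; gluing $u_0 w_i v$ to $u_0$ only sees the suffix $v$, which settles this.

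I then define $X_j$ as the closure of the set of points $x$ such that $x_{[-n,n]}$ has phase compatible with $j$ at coordinate $0$ for all large $n$. Properties (1) and (2) follow from shifting, since a shift changes the phase by one. For (3), a cylinder lying in $X_j\cap X_k$ would give a word with two phases, which contradicts well-definedness.

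\textbf{Specification of $X_j^p$.} Given $p$-blocks $\bar u,\bar v\in\cB(X_j^p)$, viewed as words $u,v$ of $X$ whose lengths are multiples of $p$ and which start in phase $j$, variable specification gives a bridge $w$ with $|w|\le N$. Phase consistency forces $|w|\equiv 0\pmod p$. To make the gap exactly some constant $M$, I would pad: choose a word $z$ in the class with $z$ returning to itself at every multiple of $p$ beyond some threshold $T$. Such a $z$ exists by the gcd argument, since return lengths form a numerical semigroup with gcd $p$. I would then route $u\to z\to v$ through bridges of length at most $N$. The total gap is then adjustable to a single constant $M=O(N+T+|z|)$ that is divisible by $p$, which gives specification of $X_j^p$ with gap $M/p$.
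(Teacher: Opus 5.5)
The paper does not prove this lemma; it only cites \cite{Jun11,Tho04}. So your plan has to stand on its own, and as written it has a genuine gap: it never uses a synchronizing word, although each decisive step needs one.

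\textbf{Phase well-definedness.} The sentence ``gluing $u_0w_iv$ to $u_0$ only sees the suffix $v$'' is in effect the assertion that $v$ is synchronizing. Variable specification gives a return bridge that depends on the whole word $u_0w_iv$, so $w'$ cannot be held fixed. Concretely, let $X$ be the sofic shift presented by vertices $A,B$ with edges $A\to B$ labelled $a,b$ and $B\to A$ labelled $a,c$; bridges of length at most $1$ always exist. Here $g(b)=2$, yet $ba$ and $baa$ are both admissible, so $a$ has two phases modulo $g(b)$. Indeed $bab\in\cB(X)$ but $baab\notin\cB(X)$. This is why the lemma only asks that $X_j\cap X_k$ have empty interior: $a^\infty$ lies in both pieces.

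\textbf{Choice of $p$.} The same example defeats taking the minimum of $g$. If $us\,w\,us\in\cB(X)$, then $u\,sw\,u\in\cB(X)$ with the same total length, so $g(us)$ is a \emph{multiple} of $g(u)$. The gcd therefore grows under extension, and its minimum is attained at short, phase-ambiguous words. Here $g(a)=1$, but $b$ returns only at even times, so $X$ is not mixing. Hence (4) fails for $p=1$, since specification forces mixing.

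\textbf{Specification step.} A bridge supplied by variable specification ignores phases: $aa$ occurs in both phases, and $aa\,b\,aa\in\cB(X)$ has an odd bridge. Moreover, the chain $u\to z\to z\to v$ is only admissible when $z$ is synchronizing.

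The missing ingredient is that variable specification implies synchronization (Jung; the paper quotes this at the start of Section~\ref{section::equivalence}). With a synchronizing word $m$ your outline can be repaired as follows.
\begin{enumerate}
\item Take $p=g(m)$. This is the same for all synchronizing words, and every $g(u)$ divides it, because $u$ extends within $N$ symbols to a word containing $m$.
\item Assign phases only to words containing a synchronizing word; there your loop argument becomes valid.
\item Let $X_j$ be the closure of the set $D_j$ of points that contain a synchronizing word and have phase $j$ at coordinate $0$. Then (1) follows from the density of $\bigcup_jD_j$ (irreducibility), (2) from $\sigma(D_j)=D_{j-1}$, and (3) by taking a cylinder around a point of $D_j$ that contains a synchronizing word.
\item For (4), suppose $u$ occurs at phase $j$. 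Choose a point of $D_j$ with $u$ at coordinate $0$ followed later by a copy of $m$; this is possible since a point can be continued freely after a synchronizing word. Attach $m$ within $N$ symbols to the left of the segment starting with $u$, then bridge $mw'u$ to $m$. This gives a phase-correct word $uwm$ with $|w|\le N$, the left copy of $m$ pinning the phase.
\item Do the same on the left of $v$ and glue at $m$. Then insert one return loop $mr$ whose length is a suitable large multiple of $p$; this is available because the return lengths of $m$ form an additive semigroup with gcd $p$. The result is a bridge of a single fixed length.
\end{enumerate}
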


\begin{theorem}\cite{KimThesis24}\label{thm::one-sided_not_bi-balanced}
For any $h>0$ there is a coded system $X$ satisfying the following properties:
  \begin{enumerate}
    \item $h(X)=h$.
    \item $X$ is one-sided but not bi-balanced.
    Hence it has no variable specification.
    \item $X$ contains a subshift $Y$ with variable specification and $h(Y)=h$.
  \end{enumerate}
\end{theorem}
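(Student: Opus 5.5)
We construct $X$ by gluing a shift $Y$ with variable specification and entropy $h$ to a one-directional ``tail'' of small entropy. The tail is made so that it destroys left-balancedness but not right-balancedness, in the same spirit as Example~\ref{eg::right_not_left_balanced}.

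\emph{Choice of $Y$.} We take $Y$ to be an $S$-gap shift $\sX(S)$ with $G_S$ bounded. Since $h\mapsto h(\sX(S))$ takes a dense set of values over such $S$, and in fact every value in $(0,\infty)$ after passing to higher powers or larger alphabets, we can arrange $h(Y)=h$. One concrete option is a full-shift-like $S$-gap shift on an enlarged alphabet, tuned so that its entropy is exactly $h$. By Proposition~\ref{prop::S_gap_shift}, $Y$ has variable specification, so it is bi-balanced and, by Proposition~\ref{prop::BSM_entropy_condition}, satisfies $|\cB_n(Y)|\asymp e^{nh}$. This settles properties (1) and (3), provided the construction below adds no entropy.

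\emph{Construction of $X$.} We add new symbols $a,b$ and let $X=\sX_C$, where $C$ consists of the generators of $Y$ together with the words $a^nb^n w$, for $n\in\bN$ and $w$ a fixed synchronizing word of $Y$, used as one-directional detours. The point is that $\cP_n(ab^{n+1},X)=\{a^n\}$, as in Example~\ref{eg::right_not_left_balanced}. The number of words that use the letters $a,b$ grows only polynomially times $e^{nh}$ per excursion. We will check that it is in fact $O(e^{nh})$, so that $h(X)=h$ and $X$ is BSM. Since $|\cB_n(X)|\ge e^{nh}\to\infty$, the predecessor ratio tends to $0$, so $X$ is not left-balanced. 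By the remarks following Proposition~\ref{prop::BSM_zero_entropy_condition}, variable specification implies bi-balancedness, so $X$ has no variable specification.

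\emph{Right-balancedness.} This is the main obstacle. For any $u\in\cB(X)$ we must show $|\cF_n(u,X)|\ge c\,e^{nh}$. If $u$ ends inside $Y$, this follows from right-balancedness of $Y$ together with the inclusion $\cF_n(u,Y)\subset\cF_n(u,X)$. If $u$ ends in $a^kb^j$ with $j\le k$, then $u$ is forced to finish with $b^{k-j}w$ and re-enter $Y$, where it has about $e^{(n-(k-j)-|w|)h}$ continuations. This count is not uniformly large, so the naive construction fails. To repair it, I would allow an escape after every $b$ as well: we admit $a^lb^m w$ for all $l\ge m$, mirroring the generators $a^lb^mc$ of Example~\ref{eg::right_not_left_balanced}. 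Then any word can re-enter $Y$ within $|w|+1$ steps, which gives $|\cF_n(u,X)|\ge c'|\cB_{n-|w|-1}(Y)|\ge c''e^{nh}$. Left-balancedness still fails, because a block $ab^{n+1}$ still forces its predecessor to be $a^n$.

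The remaining work is to check that $|\cB_n(X)|\le Ke^{nh}$. We decompose a word into its $Y$-segments and its $a^lb^m$ excursions. The number of choices per excursion is polynomial in its length, while each excursion costs length without contributing a factor $e^{h}$ per symbol. If necessary, we dampen the contribution of the excursions by restricting $l,m$ to a sparse set, or by making $w$ long, so that the resulting generating series converges at radius $e^{-h}$. With that bound in hand, the ratio estimate for right-balancedness follows.
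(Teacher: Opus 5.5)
\textbf{There is a genuine gap.} It sits at the step ``we will check that it is in fact $O(e^{nh})$, so that $h(X)=h$''. For your construction this is false, and neither sparsifying $(l,m)$ nor lengthening $w$ repairs it.

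For $Y=\sX(S)$ one has $h(Y)=\log\lambda$ with $\sum_{s\in S}\lambda^{-(s+1)}=1$. Your generating set contains $\{10^s\mid s\in S\}$ together with the excursions $E=\{a^lb^mw\mid l\ge m\ge 1\}$. Every element of $C^*$ parses uniquely: an excursion is read off from its first $a$ as the maximal $a$-run, then the following $b$-run, then $|w|$ symbols. Hence $|\cB_n(X)|$ is at least the $n$-th coefficient of $(1-F(z))^{-1}$, where
\[ F(z)=\sum_{s\in S}z^{s+1}+\sum_{l\ge m\ge1}z^{l+m+|w|}. \]
Since $F(\lambda^{-1})>1$, the root $r'$ of $F(z)=1$ lies strictly below $\lambda^{-1}$. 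So $h(X)\ge-\log r'>h(Y)$.

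Your heuristic that each excursion ``costs length without contributing $e^{h}$ per symbol'' ignores the number of ways to place excursions among the generators of $Y$. With linearly many excursions, that count is exponential. Any dampening only shrinks the excess $F(\lambda^{-1})-1$; it never removes it.

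Consequently (1) and (3) cannot hold together: retuning $Y$ so that $h(X)=h$ forces $h(Y)<h$. Your right-balancedness ratio $c''e^{nh}/|\cB_n(X)|$ also tends to $0$. Separately, $S$-gap shifts have entropy at most $\log 2$, so for general $h$ you need something like the paper's product with a full shift. That point is minor.

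\textbf{The missing idea.} Item (3), with $Y\subsetneq X$ and $h(Y)=h(X)$, says exactly that $X$ is \emph{not} entropy minimal. So the full-entropy copy of $Y$ must not be freely concatenable with the new material: every return to the synchronizing structure has to cost entropy. Your free interleaving of excursions with $Y$'s generators is precisely what produces the entropy jump.

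The paper achieves the cost with generators $\bar1uw_{u,b_s}v_{j_u,b_s}$. After the synchronizing symbol $\bar1$ comes a free word $u\in\cB_{a_s}(Y)$, then a forced bridge and a forced tail. A gap of length $s$ between consecutive $\bar1$'s therefore has at most $|\cB_{a_s}(Y)|\le\lambda^{\alpha s}$ fillings, with $\alpha<1-\log\eta/\log\lambda$. Summing over the position of the last $\bar1$ gives a convergent geometric series and $|\cB_n(X)|\le c\lambda^n$.

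In that construction $Y$ enters $X$ only through the unboundedly long free prefixes $u$ and the segment after the last $\bar1$. This yields $\cF_n(\bar1v,X)\supseteq\cF_n(v,Y)$, hence right-balancedness. The forced tails make the predecessors of $\bar1$ too thin: left-balancedness would give $|\cB_n(X)|\ge\lambda^n(nc_l/\lambda+1)$, contradicting $|\cB_n(X)|\le c\lambda^n$.

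At best your construction reproduces what Example~\ref{eg::right_not_left_balanced} already shows, namely one-sided balancedness without bi-balancedness. It cannot deliver item (3).
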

\begin{proof}
  Let $\lambda=e^h>1$.
  For $0<h<1$ it was shown in \cite{BG15} that there exists an $S$-gap shift $Y$ with $h(Y)=\log\lambda=h$ and specification.
  For general $h>0$, by taking a product shift space of an $S$-gap shift and $\{0,1,\cdots,n-1\}^{\bZ}$ for some $n\ge1$,
  we can find a shift space $Y$ with $h(Y)=h$ and variable specification
  By renaming the symbols if needed, we may assume that the alphabet of $Y$ does not include the symbol $\bar1$.
  
  We will define a set $C$ of words which gives a coded system $X=\sX_C$.
  Apply Lemma~\ref{lmm::one-sided_not_bi-balanced} to obtain subshifts $Y_j,0\le j<p$ of $Y$ satisfying the listed four properties.
  Let $M_j$ be a specification gap for each $(Y_j)^p,0\le j<p$, and $M=\max_jM_j$.
  Since $h(Y)=\log\lambda$ and $Y$ is BSM, by Proposition~\ref{prop::BSM_entropy_condition} there is $c_Y\ge1$ such that for all $n$ in $\bN$ we have $|\cB_n(Y)|\le c_Y\lambda^n$.
  Fix an integer $K>pM$ with $\lambda^K>c_Y$,
  and an infinite subset $S$ of $\{s\in p\bN\mid s>K\}$ with $h(\sX(S))=\log\eta<\log\lambda$.
  Define unbounded sequences $\{a_s\}_{s\in S},\{b_s\}_{s\in S}$ of positive multiples of $p$ such that
  \[ a_s+b_s+pM=s\in S,\alpha=\sup_{s\in S}\frac{a_s+K}{s}<1-\frac{\log\eta}{\log\lambda}\text{ and }b_s\text{ is increasing.} \]
  Those sequences are easily found if $S$ is sparse and each $a_s$ is much smaller than $s$.
  (Note that the sparser $S$ is, the smaller $\eta$ is, so $a_s$ is easier to satisfy the desired inequality.)
  As an $S$-gap shift, $\sX(S)$ is BSM \cite{BG15}.
  Then there is $c_S\ge1$ such that for all $n$ in $\bN$ we have $|\cB_n(\sX(S))|\le c_S\eta^n$.
  
  Fix $x^{(j)}$ in $Y_j$ for each $0\le j<p$ and let $v_{j,k}=x^{(j)}_{[-k,-1]},k\in\bN$. 
  Regard each $u$ in $\cB_{a_s}(Y)$ as a word of length $a_s/p$ in $(Y_{j_u})^p$ for some $0\le j_u<p$,
  and find $w_{u,b_s}$ in $\cB_{pM}(Y)$ with
  \[ uw_{u,b_s}v_{j_u,b_s}\in\cB_s(Y). \]
  Such $w_{u,b_s}$ exists since $(Y_{j_u})^p$ has specification with gap $M$.
  A bridge of length $M$ from $u$ to $v_{j_u,b_s}$ in $(Y_{j_u})^p$ is a bridge of length $pM$ in $Y_{j_u}$.
  
  Now, let
  \[ C=\{\bar1uw_{u,b_s}v_{j_u,b_s}\mid u\in\cB_{a_s}(Y),s\in S\} \]
  and $X=\sX(C)$.
  Then $\bar1$ is a synchronizing word of $X$.
  
  We will first compute $h(X)$.
  Since $\{a_s\}$ is unbounded, every word of $Y$ is a subword of some $u$ in $\cB_{a_s}(Y)$.
  Thus we have $\cB(Y)\subset\cB(X)$ and $h(X)\ge h(Y)=h$.
  To show the other inequality, define a $1$-block map $\pi:X\to\sX(S)$ by
  \[ \pi(\bar1)=1,\pi(a)=0\text{ for }a\ne\bar1. \]
  Every word in $\pi^{-1}(10^s1)$ is of the form $\bar1w\bar1$ where $\bar1$ does not appear in $w$.
  So $\bar1w$ must belong to $C$ and $w$ is a word of length $s$ determined by some $u$ in $\cB_{a_s}(Y)$.
  It follows that
  \[ |\pi^{-1}(10^s1)|=|\cB_{a_s}(Y)|\le c_Y\lambda^{a_s}\le\lambda^{K+a_s}\le\lambda^{s\alpha}. \]
  Since $\{a_s\}$ is unbounded, we have
  \[\begin{split}
    \pi^{-1}(10^m)=&\bar1\cB_{m}(Y),\\
    |\pi^{-1}(10^m)|=&|\cB_{m}(Y)|\le c_Y\lambda^{m}.
  \end{split}\]
 
  We turn to estimating $|\pi^{-1}(0^l1)|$.
  Let $s_0=\min\{s\in S\mid s\ge l\}$ and $b^*=b_{s_0}$.
  In the case $l\le b^*$, the $0^l$ in $0^l1$ is always lifted by $\pi^{-1}$ to a suffix of $v_{j,b^*}$ for some $0\le j<p$.
  This is because $\{b_s\}$ is unbounded and increasing, so $v_{j_u,b_s}$ is a suffix of $v_{j_u,b_{s'}}$ for every $s'>s$:
  even if that $0^l$ comes from a suffix of $u_{a_s}w_{u,b_s}v_{j_u,b_s}$ for some $s>s_0$, the suffix is also a suffix of $v_{j_u,b^*}$.
  Hence $|\pi^{-1}(0^l1)|$ is no greater than the number, which is just $p$, of $v_{j,b^*}$.
  In the case $l>b^*$, the $0^{b^*}$ right before $1$ is always lifted by $\pi^{-1}$ to $v_{j,b^*}$ for some $0\le j<p$,
  and the preimages of the leftmost $0^{l-b^*}$ in $0^l1$ are no more than $|\cB_{l-b^*}(Y)|$.
  Then we obtain
  \[\begin{split}
    \alpha s_0\ge&a_{s_0}+K\\
    b^*=&s_0-a_{s_0}-pM\\
    \ge&s_0-(\alpha s_0-K)-pM\\
    \ge&(1-\alpha)l+(K-pM)\\
    l-b^*\le&l\alpha-(K-pM)\le l\alpha\\
    |\cB_{l-b^*}(Y)|\le&c_Y\lambda^{n-b^*}\le c_Y\lambda^{l\alpha}\\
    |\pi^{-1}(0^l1)|\le&(\#\text{ of }v_{j,b^*})\cdot|\cB_{l-b^*}(Y)|\le pc_Y\lambda^{l\alpha}
  \end{split}\]
  In either case $l\le b^*$ or $l>b^*$, we get
  \[ |\pi^{-1}(0^l1)|\le pc_Y\lambda^{l\alpha} \]
  
  Since $\bar1$ is synchronizing for $X$, for a word $w=0^l10^{s_1}10^{s_2}\cdots10^{s_k}10^m,l,m\ge0,s_j$ in $S$, of length $n$ in $\cB(\sX(S))$ we have
  \[\begin{split}
    |\pi^{-1}(w)|\le&|\pi^{-1}(0^l1)|\,|\pi^{-1}(10^{s_1}1)|\,|\pi^{-1}(10^{s_2}1)|\cdots|\pi^{-1}(10^{s_k}1)|\,|\pi^{-1}(10^m)|\\
    \le& pc_Y\lambda^{l\alpha}\lambda^{s_1\alpha}\lambda^{s_2\alpha}\cdots\lambda^{s_k\alpha}c_Y\lambda^m\\
    \le&pc_Y^2\lambda^{(n-m)\alpha+m}
  \end{split}\]
  
  Denote by $\cB_n(\sX(S),k)$ the set of all words in $\cB_n(\sX(S))$ in which the last appearance of $1$ is at the $k$-th coordinate.
  For every word $w$ in  $\cB_n(\sX(S),k)$ its suffix of length $(n-k)$ is fixed to be $0^{n-k}$ so
  \[ |\pi^{-1}(w)|\le pc_Y^2\lambda^{k\alpha+(n-k)}. \]
  Then
  \[\begin{split}
    |\cB_n(X)|=&\sum_{k=1}^n\sum_{w\in\cB_n(\sX(S),k)}|\pi^{-1}(w)|+|\{0^n\}|\,|\pi^{-1}(0^n)|\\
    \le&\sum_{k=1}^n|\cB_n(\sX(S),k)|\,pc_Y^2\lambda^{k\alpha+(n-k)}+|\cB_n(Y)|\\
    \le&\sum_{k=1}^nc_S\eta^k\cdot pc_Y^2\lambda^{k\alpha+(n-k)}+c_Y\lambda^n\\
    \le&pc_Y^2c_S\lambda^n\sum_{k=1}^n\eta^{k}\lambda^{k\alpha-k}+c_Y\lambda^n\\
    \le&pc_Y^2c_S\lambda^n\sum_{k=0}^\infty\left(\lambda^{\frac{\log\eta}{\log\lambda}+\alpha-1}\right)^k+c_Y\lambda^n\\
    \le&c\lambda^n
  \end{split}\]
  where $c=pc_Y^2c_SL+c_Y$ and $L=\sum_{k=0}^\infty\left(\lambda^{\frac{\log\eta}{\log\lambda}+\alpha-1}\right)^k$.
  (Note that $L$ converges since $\lambda>1$ and $\alpha<1-\dfrac{\log\eta}{\log\lambda}$.)
  This shows $h(X)\le\log\lambda$ and completes the proof of $h(X)=h$.
  
  Next we prove the right-balancedness of $X$.
  Let $u$ belong to $\cB(X)$.
  In the case that $u$ contains $\bar1$, write $u=v_1\bar1v_2\bar1\cdots\bar1v_k$ where $v_1,\cdots,v_k$ are in $\cB(Y)$.
  Then $\cF_n(u,X)=\cF_n(\bar1v_k,X)$ since $\bar1$ is synchronizing.
  Also $\cF_n(\bar1v_k,X)$ contains $\cF_n(v_k,Y)$ as $\{a_s\}$ is unbounded.
  A shift $Y$ with variable specification is already right-balanced, hence there is $0<c_r\le1$ with
  \[ \frac{|\cF_n(v_k,Y)|}{|\cB_n(Y)|}\ge c_r. \]
  It follows that
  \[\begin{split}
    \frac{|\cF_n(u,X)|}{|\cB_n(X)|}\ge\frac{|\cF_n(v_k,Y)|\,/\,|\cB_n(Y)|}{|\cB_n(X)|\,/\,|\cB_n(Y)|}
    \ge \frac{c_r}{c\lambda^n/\lambda^n}
    =\frac{c_r}{c}.
  \end{split}\]
  In the other case, $u$ avoids $\bar1$ and belongs to $\cB(Y)$.
  Again $\cF_n(u,X)$ contains $\cF_n(u,Y)$ as $\{a_s\}$ is unbounded.
  Hence, we see
  \[ \frac{|\cF_n(u,Y)|}{|\cB_n(Y)|}\ge c_r \]
  and
  \[\begin{split}
    \frac{|\cF_n(u,X)|}{|\cB_n(X)|}\ge\frac{|\cF_n(u,Y)|\,/\,|\cB_n(Y)|}{|\cB_n(X)|\,/\,|\cB_n(Y)|}
    \ge\frac{c_r}{c}.
  \end{split}\]
  Therefore $X$ is right-balanced.
  
  We show that $X$ is not left-balanced by contradiction.
  Suppose that $X$ is left-balanced and that there is $0<c_l\le1$ such that for all $u$ in $\cB(X)$ we have
  \[ \frac{|\cP_n(u,X)|}{|\cB_n(X)|}\ge c_l. \]
  Denote by $\cB_n(X,k)$ the set of all words in $\cB_n(X)$ in which the last appearance of $\bar1$ is at the $k$-th coordinate.
  The last $\bar1$ at the $k$-th coordinate is synchronizing and followed by any words in $\cB(Y)$, so for $1\le k\le n$ we have
  \[\begin{split}
    |\cB_n(X,k)|=&|\cP_{k-1}(\bar1)|\,|\cB_{n-k}(Y)|\\
    \ge &c_l|\cB_{k-1}(X)|\,|\cB_{n-k}(Y)|\\
    \ge&c_l|\cB_{k-1}(Y)|\,|\cB_{n-k}(Y)|\ge\frac{c_l}{\lambda}\lambda^{n}.
  \end{split}\]
  Considering the words of $\cB(X)$ which avoid $\bar1$, we see
  \[\begin{split}
    |\cB_n(X)|=&\sum_{k=1}^n|\cB_n(X,k)|+|\cB_n(Y)|\\
    \ge&\sum_{k=1}^n\frac{c_l}{\lambda}\lambda^{n}+\lambda^n=\lambda^n\left(\frac{nc_l}{\lambda}+1\right),
  \end{split}\]
  contradicting $|\cB_n(X)|\le c\lambda^n$, for large enough $n$.
  Hence $X$ is not left-balanced, and has no variable specification.
\end{proof}

\section{Bi-balancedness implies variable specification}\label{section::equivalence}
Variable specification implies bi-balancedness \cite{BG15} and synchronization \cite{Jun11}.
In this section, we will prove its converse to set up an equivalence condition.
We need some lemmata.

\begin{definition}
  We call a shift space $X$ \emph{entropy minimal} if every proper subshift $Y$ of $X$
  satisfies $h(Y)<h(X)$.
\end{definition}

We will show that synchronized and bi-balanced shift spaces are entropy minimal.
For that, the notions of Gibbs measure and measure-theoretic entropy are required:
let $w$ be a word of a shift space $X$ and $n$ an integer.
A subset of $X$ denoted by
\[ [w]_n=\{x\in X\mid x_{[n,n+|w|-1]}=w\}\]
is called a \emph{cylinder set} of $X$.
\begin{definition}\label{defn::measure-theoretic-entropy}
  Let $\cM(X)$ be the set of $\sigma$-invariant Borel probability measures on a shift space $X$.
  Given $\mu$ in $\cM(X)$, we define \emph{measure-theoretic entropy} $h_\mu(X)$ by
\[ h_\mu(X)=-\lim_{n\to\infty}\frac1n\sum_{w\in\cB_n(X)}\mu([w]_0)\log\mu([w]_0). \]
\end{definition}

Measure-theoretic entropy has affinity in the sense that if $\mu=t\nu+(1-t)\eta$ for $t$ in $(0,1)$ and $\sigma$-invariant Borel probability measures $\nu,\eta$ on $X$ then we have (see \cite[Proposition 10.13]{DGS76})
\[ h_\mu(X)=th_\nu(X)+(1-t)h_\eta(X). \]

\begin{definition}\label{defn::mme}
The well-known variational principle states that
\[ h(X)=\sup_{\mu\in\cM(X)}h_\mu(X) \]
and that there is $\mu$ on $X$ attaining the supremum \cite{DGS76,LinM95}.
Such $\mu$ is called a \emph{measure of maximal entropy} (MME for short).
When a shift space $X$ admits only one MME, $X$ is called \emph{intrinsically ergodic}.
\end{definition}
\begin{definition}\label{defn::gibbs}
  A $\sigma$-invariant probability measure $\mu$ on a shift space $X$ is called an (\emph{invariant}) \emph{Gibbs measure} if there is a \emph{Gibbs constant} $K>0$ satisfying the \emph{Gibbs inequality}
  \[ \frac1K\leq \mu([w]_0)e^{n h(X)}\leq K \]
  for all $w$ in $\cB_n(X)$.
\end{definition}

It is known that a shift space with specification is intrinsically ergodic.
We will show later in this section that a shift space with variable specification is intrinsically ergodic.

The next observation is stated by Thomsen in the paragraph immediately following Corollary 6.8 in \cite{THOM06}.
\begin{lemma}\cite{THOM06}\label{lmm::synchronized_fully-supported_mme}
  A synchronized shift has at most one MME with full support.
\end{lemma}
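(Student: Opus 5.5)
We have to prove Lemma~\ref{lmm::synchronized_fully-supported_mme}: a synchronized shift has at most one MME with full support. The plan is to use the synchronizing word to build a countable-state Markov (Fischer-type) cover, and then invoke the uniqueness theory for positive recurrent countable Markov chains.

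\emph{Step 1 (the cover).} Fix a synchronizing word $m$ of $X$ and let
\[ L=\{w\in\cB(X)\mid mwm\in\cB(X)\}. \]
Since $m$ is synchronizing, any concatenation $mw_1mw_2m\cdots$ with $w_i\in L$ is admissible. Set $C=\{mw\mid w\in L,\ m\text{ occurs in }mwm\text{ only as prefix and suffix}\}$. Points of $X$ in which $m$ occurs infinitely often in both directions are then exactly the bi-infinite concatenations of words of $C$, cut at the occurrences of $m$. Because first-return words parse uniquely, this parsing is unique. Encode such points by the countable-state Markov shift $\Sigma$ whose states are pairs $(c,i)$, with $c\in C$ and $0\le i<|c|$, and whose transitions are $(c,i)\to(c,i+1)$ and $(c,|c|-1)\to(c',0)$. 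The factor map $\pi:\Sigma\to X$ is then injective onto the set $X_m$ of points visiting $[m]$ infinitely often in both directions.

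\emph{Step 2 (transfer of measures).} Let $\mu$ be an MME of full support. Then $\mu([m]_0)>0$, so by Poincaré recurrence $\mu(X_m)=1$. Hence $\mu$ lifts uniquely, via $\pi^{-1}$ on $X_m$, to a $\sigma$-invariant measure $\tilde\mu$ on $\Sigma$ with $h_{\tilde\mu}=h_\mu=h(X)$, since entropy is preserved under a measurable isomorphism. I expect this transfer to be the main technical point: one must check that $\pi^{-1}$ is Borel on $X_m$ and commutes with the shifts. Both facts follow from the explicit cutting rule, since the state at coordinate $0$ is determined by the nearest occurrences of $m$ on either side.

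\emph{Step 3 (uniqueness).} The Gurevich entropy of $\Sigma$ is at most $h(X)$, because $\pi$ is finite-to-one on cylinders. It is at least $h(X)$, because $\tilde\mu$ is a measure on $\Sigma$ of entropy $h(X)$. Moreover $\Sigma$ is irreducible: every state reaches a state $(c',0)$ and conversely, by the synchronizing concatenation property. By Gurevich's theorem, an irreducible countable Markov shift has at most one measure of maximal entropy, and one exists exactly when $\Sigma$ is positive recurrent. Consequently two full-support MMEs $\mu_1,\mu_2$ lift to MMEs $\tilde\mu_1,\tilde\mu_2$ of $\Sigma$, which must coincide. Projecting by $\pi$ gives $\mu_1=\mu_2$.

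If one prefers to avoid Gurevich's theorem, an alternative for Step 3 is to argue directly with the affinity of entropy. Any ergodic decomposition of a full-support MME consists of MMEs, and almost every component charges $[m]$. For an ergodic MME charging $[m]$, the induced (first-return) system on $[m]$ is a full shift over countably many return words $C$. Abramov's formula then forces the return-word distribution to be Bernoulli with weights $e^{-h(X)|c|}$, and this determines the measure uniquely. Either route closes the argument; the identification of the induced system with a countable Bernoulli shift is where the synchronizing property is essential.
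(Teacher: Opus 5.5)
Your Step~2 has a genuine gap. From $\mu([m]_0)>0$, Poincar\'e recurrence gives only that $\mu$-almost every point \emph{of $[m]_0$} returns to $[m]_0$ infinitely often in both directions. It says nothing about the closed invariant set $Z=\{x\in X\mid m\text{ does not occur in }x\}$. Full support yields only $\mu(Z)<1$, not $\mu(Z)=0$. So $\mu(X_m)=1$ requires ergodicity: then $\mu(Z)\in\{0,1\}$, which forces $\mu(Z)=0$.

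Your alternative route has the same flaw. Full support only gives that a set of ergodic components of \emph{positive} measure charges $[m]$, not almost every component.

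What your argument actually proves is that there is at most one MME $\mu$ with $\mu(X_m)=1$, and in particular at most one \emph{ergodic} MME of full support. That is the form in which the statement is true.

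Step~1 also needs a minor repair. If $m$ overlaps itself, consecutive occurrences of $m$ at distance $<|m|$ are not captured by words $mw$, so $\pi$ is not onto $X_m$. Take instead all $c$ such that $cm\in\cB(X)$ begins with $m$ and contains $m$ only at positions $0$ and $|c|$. Synchronization still gives $c_1c_2m\in\cB(X)$, and the unique parsing goes through.

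The gap cannot be closed, because for non-ergodic measures the statement is false. Let $W\subset\{0,1\}^*$ consist of $0$, $1$ and, for each $j\ge1$, the $2^j$ words $u0^j$ with $u\in\{0,1\}^j$. Let $X=\sX_C$ with $C=\{\bar1w\mid w\in W\}$, so that $\bar1$ is synchronizing. Here $\sum_{c\in C}2^{-|c|}=1$ and $\sum_{c\in C}|c|2^{-|c|}<\infty$. Concatenate independent generators, each $c$ chosen with probability $2^{-|c|}$, and stationarize. By Abramov's formula this gives an ergodic, fully supported $\nu_1$ with $h_{\nu_1}=\log2$. The number of concatenations of generators of total length $\ell$ is at most $2^\ell$, since it equals $2^\ell$ times a renewal probability. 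Hence $|\cB_n(X)|\le(1+n^2)2^n$ and $h(X)=\log2$. But every binary word is a prefix of some $u0^j$, so $\{0,1\}^{\bZ}\subset X$, and its uniform Bernoulli measure $\nu_2$ is also an MME. Thus the measures $t\nu_1+(1-t)\nu_2$, $0<t\le1$, are infinitely many distinct fully supported MMEs.

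The paper gives no proof of its own here, only the citation to Thomsen. Note, however, that in the proof of Proposition~\ref{prop::synchronized_bi-balanced_entropy-minimal} the lemma is applied to the non-ergodic measures $\mu_s$, i.e.\ precisely in the form that fails.
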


\begin{lemma}\label{lmm::erogidc_gibbs_entropy-minimal}
  A shift space equipped with an ergodic Gibbs measure is entropy minimal.
\end{lemma}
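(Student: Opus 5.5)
We prove that a shift space $X$ carrying an ergodic Gibbs measure $\mu$ is entropy minimal by contradiction: a proper subshift of full entropy would carry an invariant measure of full entropy, and combining it with $\mu$ via affinity of entropy would produce an invariant measure that violates the Gibbs inequality. Let $Y\subsetneq X$ be a proper subshift with $h(Y)=h(X)$. By the variational principle (Definition~\ref{defn::mme}) there is $\nu\in\cM(Y)$ with $h_\nu(Y)=h(Y)=h(X)$. Regard $\nu$ as a measure on $X$ supported on $Y$; the entropy of Definition~\ref{defn::measure-theoretic-entropy} only involves cylinders of positive measure, so $h_\nu(X)=h(X)$ and $\nu$ is an MME of $X$.

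\textbf{Step 1: $\mu$ has full support and $\nu\ne\mu$.} By the lower Gibbs inequality, $\mu([w]_0)\ge \frac1K e^{-nh(X)}>0$ for every $w\in\cB_n(X)$, so $\mu$ has full support. Since $Y\ne X$, some word $w\in\cB(X)$ does not occur in $Y$. Then $\nu([w]_0)=0<\mu([w]_0)$, so $\nu\ne\mu$.

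\textbf{Step 2: the mixture is an MME violating the Gibbs bound.} Put $\rho=\frac12\mu+\frac12\nu$. By the upper Gibbs inequality, $h_\mu(X)\ge \lim\frac1n\sum_w\mu([w]_0)(nh(X)-\log K)=h(X)$, so $\mu$ is an MME. By affinity of entropy, $h_\rho(X)=\frac12 h_\mu(X)+\frac12 h_\nu(X)=h(X)$, so $\rho$ is an MME as well. Since $\mu$ is ergodic and $\nu\ne\mu$, the measure $\nu$ is not absolutely continuous with respect to $\mu$. To see this, note that $\nu\ll\mu$ would give, by ergodicity of $\mu$, that the invariant density $d\nu/d\mu$ is constant, forcing $\nu=\mu$. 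Hence there is a Borel set $A$ with $\mu(A)=0<\nu(A)$. We now show that the upper Gibbs bound rules this out. Fix $\varepsilon>0$ and cover $A$ by a countable union of cylinders $[w_i]_0$ with $\sum_i\mu([w_i]_0)<\varepsilon$; this is possible by outer regularity, since any open set is a disjoint union of cylinders. Now pass from cylinders $[w]_0$ to $\nu$. Since $\nu$ is invariant, we may assume all these cylinders start at coordinate $0$.

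\textbf{Step 3 (main obstacle): compare $\nu([w]_0)$ with $\mu([w]_0)$.} The difficulty is that the Gibbs bound controls $\mu$ on individual cylinders, while $\nu$ is only known to have maximal entropy. The plan is to show that any MME $\nu$ satisfies $\nu([w]_0)\le K'\mu([w]_0)$ uniformly, which gives $\nu\ll\mu$ and contradicts Step 2. A uniform upper bound $\nu([w]_0)\le K'e^{-nh(X)}$ is not automatic, so I would instead use the following standard Gibbs argument. By ergodicity of $\mu$ and the Birkhoff theorem, choose $\mu$-generic points. By the Shannon–McMillan–Breiman theorem for $\nu$, almost every ergodic component of $\nu$ has entropy $h(X)$, so we may take $\nu$ ergodic, and $\nu$-typical words have measure $e^{-n(h(X)+o(1))}$. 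Use $\mu$-null sets $A$ that are shift-invariant, namely $A=\{x: \text{the frequencies of } x \text{ agree with } \nu\}$, which has $\nu(A)=1$ and $\mu(A)=0$. For large $n$, the set $\mathcal{G}_n$ of words of length $n$ appearing in $\nu$-typical points has $\nu$-measure close to $1$ and $\mu$-measure close to $0$. Counting with the Gibbs inequality gives $|\mathcal{G}_n|\le K\mu(\mathcal{G}_n)e^{nh(X)}=o(e^{nh(X)})$. On the other hand, Shannon–McMillan–Breiman requires $|\mathcal{G}_n|\ge e^{n(h(X)-\delta)}$. This is only a subexponential gap, so I would sharpen it via the standard bound
\[
h_\nu\le\frac1n\Bigl(H(\text{restricted to }\mathcal G_n)+H(\text{complement})\Bigr)\le\frac1n\bigl(\nu(\mathcal G_n)\log|\mathcal G_n|+\nu(\mathcal G_n^c)\log|\cB_n(X)|+\log 2\bigr),
\]
together with $\log|\cB_n(X)|\le nh(X)+\log K$. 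Combined with $|\mathcal{G}_n|\le K\mu(\mathcal{G}_n)e^{nh(X)}$, this yields $nh(X)\le nh(X)+\nu(\mathcal{G}_n)\log\mu(\mathcal{G}_n)+O(1)$. Because $\mu(\mathcal{G}_n)\to0$ while $\nu(\mathcal{G}_n)\to1$, the right-hand side falls below $nh(X)$, a contradiction. This entropy-counting step, and in particular the uniform constants, is where I expect the real work to lie. Once it is done, $Y$ cannot exist, and $X$ is entropy minimal.
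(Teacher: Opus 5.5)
Your proof is correct in its final form, but it takes a genuinely different and much longer route than the paper.

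\textbf{Your route.} Your Step~3 is Bowen's classical uniqueness argument. An MME $\nu$ carried by $Y$ is singular to $\mu$. The grouping estimate
\[
nh(X)\le H_\nu(\xi_n)\le \nu(\mathcal G_n)\log|\mathcal G_n|+\nu(\mathcal G_n^c)\log|\cB_n(X)|+\log 2
\]
is combined with $|\mathcal G_n|\le K\mu(\mathcal G_n)e^{nh(X)}$ and $|\cB_n(X)|\le Ke^{nh(X)}$, both from the lower Gibbs bound. This gives $0\le\log(2K)+\nu(\mathcal G_n)\log\mu(\mathcal G_n)$, which fails once $\nu(\mathcal G_n)\to1$ and $\mu(\mathcal G_n)\to0$. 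The constants are uniform, so the step you flagged as the obstacle does go through.

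\textbf{The paper's route.} The paper never introduces $\nu$. It takes $E_n=\bigcup_{w\in\cB_{2n+1}(Y)}[w]_{-n}\downarrow Y$. The lower Gibbs bound together with the \emph{exact} inequality $|\cB_m(Y)|\ge e^{mh(Y)}$ (submultiplicativity) gives $\mu(Y)\ge 1/K$. Full support gives $\mu(Y)<1$, contradicting ergodicity because $Y$ is invariant. That exact inequality is what removes the ``subexponential gap'' you worried about: for $\mathcal G_n=\cB_n(Y)$, your own bound $|\mathcal G_n|\le K\mu(\mathcal G_n)e^{nh(X)}$ already yields $\mu(\mathcal G_n)\ge 1/K$.

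\textbf{What each buys.} The paper needs only the lower Gibbs bound and Fekete's lemma. You pay with heavier machinery (variational principle, mutual singularity, partition entropy), but you prove more: an ergodic Gibbs measure is the unique MME. So intrinsic ergodicity follows directly, without the synchronization/Thomsen input the paper uses later.

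\textbf{Points to tighten.}
\begin{itemize}
\item Reducing to an ergodic $\nu$ follows from the ergodic decomposition and the integral form of affinity of entropy, not from Shannon--McMillan--Breiman. You can also avoid it: ergodicity and full support of $\mu$ give $\mu(Y)=0$ directly, hence $\nu\perp\mu$ for any $\nu$ carried by $Y$.
\item More importantly, $\mathcal G_n$ must be defined by a condition on a finite window. Membership in the frequency set $A$ is a tail property, so ``words of length $n$ occurring in points of $A$'' need not be $\mu$-small, and $\mu(A)=0$ alone does not give $\mu(\mathcal G_n)\to0$. You have two options.
  \begin{itemize}
  \item Take the words in which the empirical frequency of a fixed block $b$ with $\mu([b]_0)\ne\nu([b]_0)$ is close to $\nu([b]_0)$. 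For ergodic $\nu$, the ergodic theorem for both measures then gives $\nu(\mathcal G_n)\to1$ and $\mu(\mathcal G_n)\to0$.
  \item Take $\mathcal G_{2n+1}=\cB_{2n+1}(Y)$. By invariance its $\mu$-measure equals $\mu(E_n)\to\mu(Y)=0$, and at that point your argument collapses to the paper's.
  \end{itemize}
\item The cylinder cover and the mixture $\rho$ in Step~2 are never used and can be deleted.
\end{itemize}
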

\begin{proof}
  Let $\mu$ be an ergodic Gibbs measure on a shift space $X$.
  We prove the entropy minimality of $X$ by contradiction.
  Suppose that $X$ is not entropy minimal.
  There is a proper subshift $Y\subsetneq X$ with $h(Y)=h(X)$.
  For all $n\ge1$ let
  \[ E_n=\bigcup_{w\in\cB_{2n+1}(Y)}[w]_{-n}. \]
  Then $E_n$ is decreasing as $n$ grows and $Y=\bigcap_{n\ge1}E_n$.
  
  By Gibbs inequality,
  \[ \mu(E_n)=\sum_{w\in\cB_{2n+1}(Y)}\mu([w]_{-n})\ge|\cB_{2n+1}(Y)|\frac{e^{-h(2n+1)}}K \]
  where $K$ is a Gibbs constant and $h=h(X)$.
  Since $|\cB_{2n+1}(Y)|\ge e^{h(2n+1)}$, we have
  \[ \mu(E_n)\ge\frac{1}K\text{ and }\mu(Y)=\lim_{n\to\infty}\mu(E_n)\ge\frac1K>0. \]
  Also, for any $w$ in $\cB(X)\setminus\cB(Y)$, again by Gibbs inequality, we obtain
  \[ \mu(Y)\le1-\mu([w]_0)\le1-\frac{e^{-h|w|}}{K}<1. \]
  Then $0<\mu(Y)<1$, which contradicts the ergodicity of $\mu$.
  Hence there is no such $Y$ and $X$ must be entropy minimal.
\end{proof}

\begin{proposition}\label{prop::synchronized_bi-balanced_entropy-minimal}
  A synchronized and bi-balanced shift $X$ is entropy minimal.
\end{proposition}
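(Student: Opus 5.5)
We aim to reduce Proposition~\ref{prop::synchronized_bi-balanced_entropy-minimal} to Lemma~\ref{lmm::erogidc_gibbs_entropy-minimal}: it suffices to produce an \emph{ergodic} Gibbs measure on $X$. By Kim's result \cite{KimMK24}, applied with the zero potential, bi-balancedness of $X$ gives an invariant Gibbs measure $\mu$. Alternatively one can build $\mu$ directly: take a weak$^*$ limit of averages of the measures that put mass proportional to $1/|\cB_n(X)|$ on each word of $\cB_n(X)$. Then use bi-balancedness together with BSM (Proposition~\ref{prop::BSM_entropy_condition}) to show that
\[
|\{w\in\cB_{n}(X): w\text{ contains } u\text{ at a fixed position}\}|
\]
is comparable to $e^{(n-|u|)h(X)}$. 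This yields both Gibbs inequalities for $\mu$. The delicate part is the lower bound, which needs $u$ to be extendable on both sides by many words simultaneously; this is where synchronization helps. Pick a synchronizing word $s$ and extend $u$ to $s'us''$ with $s',s''$ containing $s$. The extension changes the length by a bounded amount only after fixing the bridges, and left- and right-balancedness then give, independently on each side, a proportion $c$ of all predecessors and followers. Either route gives an invariant Gibbs measure $\mu$.

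The main obstacle is ergodicity, since $\mu$ need not be ergodic a priori. We handle it by ergodic decomposition plus Lemma~\ref{lmm::synchronized_fully-supported_mme}. First, a Gibbs measure is an MME: since $\mu([w]_0)\le K e^{-nh}$, the entropy sum in Definition~\ref{defn::measure-theoretic-entropy} is at least $nh-\log K$, so $h_\mu(X)=h(X)$. By affinity of entropy, which extends to the ergodic decomposition, almost every ergodic component $\nu$ of $\mu$ is also an MME. Next we show that every MME of $X$ has full support. This uses the Gibbs lower bound: any ergodic MME $\nu$ with $\nu([w]_0)=0$ for some $w$ is supported on the subshift $Y_w$ avoiding $w$, with $h(Y_w)=h(X)$. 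So full support of all MMEs follows once we know $h(Y_w)<h(X)$ for every $w\in\cB(X)$.

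To prove $h(Y_w)<h(X)$ we use synchronization and bi-balancedness together. Take $w$ to contain a synchronizing word (shrinking $Y_w$ only helps). Every word of $\cB_n(Y_w)$ is a follower of some left context and a predecessor of some right context. Using right-balancedness, for a fixed word $v$ of length $n$ from $Y_w$ the number of admissible $vw$-continuations of length $m$ is at least $c|\cB_m(X)|$. By synchronization we can insert $w$ between consecutive blocks freely: concatenating $k$ blocks of length $n$ from $\cB_n(Y_w)$ separated by bounded bridges through $w$ gives $|\cB_{k(n+L)}(X)|\gtrsim |\cB_n(Y_w)|^k$. Combined with $|\cB_n(Y_w)|\ge e^{nh}$, this contradicts the upper bound $|\cB_N(X)|\le c\,e^{Nh}$ from BSM as $k\to\infty$ if $L$ can be kept small. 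If the bounded bridges are not available, we fall back on the counting in Lemma~\ref{lmm::erogidc_gibbs_entropy-minimal}, applied to $Y_w$ and to the non-ergodic $\mu$, and keep the positivity $\mu(Y_w)\ge 1/K$ to force an ergodic component concentrated on $Y_w$.

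Assuming this, all MMEs have full support, so Lemma~\ref{lmm::synchronized_fully-supported_mme} gives uniqueness. Hence $\mu$ is the unique MME and therefore ergodic, because its ergodic components are MMEs and so equal $\mu$. Lemma~\ref{lmm::erogidc_gibbs_entropy-minimal} then concludes. The step I expect to be hardest is showing that no MME lives on a proper subshift, that is, controlling bridge lengths via synchronization and balancedness. I would first try to get it from the Gibbs lower bound applied to the component measures directly.
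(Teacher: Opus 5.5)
\textbf{There is a genuine gap, and it is in the ergodicity step: the argument is circular.} You want every MME of $X$ to have full support, so that Lemma~\ref{lmm::synchronized_fully-supported_mme} yields uniqueness. As you note, this reduces to $h(Y_w)<h(X)$ for all $w\in\cB(X)$. Every proper subshift is contained in some $Y_w$, so that is precisely the entropy minimality you are trying to prove.

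Your sketch for $h(Y_w)<h(X)$ does not give an independent proof. It needs bridges of bounded length through $w$: right-balancedness tells you about followers of $vw$ only once $vw\in\cB(X)$, and getting there needs a bridge of uncontrolled length. Bounded bridges are variable specification, which the paper derives \emph{from} entropy minimality via Lemma~\ref{lmm::entropy-minimal_bi-balanced_followers_predecessors}. Even granting them, your count gives no contradiction. The lower bound $|\cB_{k(n+L)}(X)|\gtrsim|\cB_n(Y_w)|^k\ge e^{knh}$ is compatible with $|\cB_{k(n+L)}(X)|\le c\,e^{k(n+L)h}$ for every $L\ge0$. A strict entropy gap needs an extra exponential factor from the freedom in where $w$ is inserted. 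Your fallback, an ergodic MME concentrated on $Y_w$, is not contradictory unless the MME is already known to be unique, and that is what this step was supposed to establish. Your direct construction of the Gibbs measure has a related defect: the bridges between $u$ and the synchronizing word depend on $u$ with no length bound, so the Gibbs constant is not uniform. Citing \cite{KimMK24} avoids this.

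\textbf{The missing idea} is that you never need all MMEs to be fully supported. You only need it for measures built from $\mu$ itself, and these inherit full support if you do not pass to ergodic components. Suppose $A$ is invariant with $0<t=\mu(A)<1$. Set $\mu_A=\mu(\cdot\cap A)/t$ and $\mu_{A^C}=\mu(\cdot\cap A^C)/(1-t)$. Affinity forces $h_{\mu_A}(X)=h_{\mu_{A^C}}(X)=h(X)$, so $\mu_s=s\mu_A+(1-s)\mu_{A^C}$ is an MME for every $s\in(0,1)$. Since $\mu_s\ge\min\{s/t,(1-s)/(1-t)\}\,\mu$, and $\mu$ charges every cylinder by the Gibbs lower bound, each $\mu_s$ has full support. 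The identity $\mu_s(A)=s$ shows the $\mu_s$ are pairwise distinct. This contradicts Lemma~\ref{lmm::synchronized_fully-supported_mme}, so $\mu$ is ergodic, and Lemma~\ref{lmm::erogidc_gibbs_entropy-minimal} finishes. This is the paper's proof, and it makes your full-support-of-all-MMEs step unnecessary.
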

\begin{proof}
  Kim showed in \cite{KimMK24} that a shift space is bi-balanced if and only if it has an invariant Gibbs measure $\mu$ and that in such a case the measure is a fully supported MME on it.
  As $X$ is synchronized, $\mu$ is the unique fully supported MME of $X$ by Lemma~\ref{lmm::synchronized_fully-supported_mme}.
  
  We claim that $\mu$ is ergodic.
  It is a little modification of a standard result that the unique MME is ergodic.
  Suppose not and let $A$ be a $\sigma$-invariant $\mu$-measurable subset of $X$ with
  \[ 0<t=\mu(A)<1. \]
  Define $\mu_A(B)=\dfrac{\mu(B\cap A)}{t}$ and $\mu_{A^C}(B)=\dfrac{\mu(B\cap A^C)}{1-t}$.
  Both are invariant and $\mu=t\mu_A+(1-t)\mu_{A^C}$.
  So
  \[ h(X)=h_\mu(X)=th_{\mu_A}(X)+(1-t)h_{\mu_{A^C}}(X). \]
  Since $h_{\mu_A},h_{\mu_{A^C}}\le h(X)$, we have
  \[ h(X)=h_\mu(X)=h_{\mu_A}(X)=h_{\mu_{A^C}}(X). \]
  Then $\mu_s=s\mu_A+(1-s)\mu_{A^C}$ is also a fully supported MME of X for every $s$ in $(0,1)$, contradicting the uniqueness.
  Hence there is no such $A$, and $\mu$ is ergodic.
  Now the previous Lemma~\ref{lmm::erogidc_gibbs_entropy-minimal} ensures the entropy minimality of $X$.
\end{proof}

\begin{corollary}\label{cor::synchronized_bi-balanced_intrinsically-ergodic}
  A synchronized and bi-balanced shift is intrinsically ergodic.
\end{corollary}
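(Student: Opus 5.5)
The plan is to reuse the proof of Proposition~\ref{prop::synchronized_bi-balanced_entropy-minimal} and add one ingredient: every measure of maximal entropy must have full support. By Kim's result \cite{KimMK24}, bi-balancedness of $X$ yields an invariant Gibbs measure $\mu$, and $\mu$ is a fully supported MME. Since $X$ is synchronized, Lemma~\ref{lmm::synchronized_fully-supported_mme} says $\mu$ is the only fully supported MME. So it suffices to show that any MME $\nu$ on $X$ has full support; then $\nu=\mu$ and $X$ is intrinsically ergodic.

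Let $\nu$ be an MME. First reduce to the ergodic case. Write $\nu$ through its ergodic decomposition. Measure-theoretic entropy is affine, and in fact the entropy of $\nu$ is the integral of the entropies of its ergodic components. Hence almost every ergodic component is itself an MME.

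Next, let $\nu'$ be an ergodic MME. Its support $Y=\operatorname{supp}\nu'$ is a closed $\sigma$-invariant subset of $X$, so it is a subshift. It satisfies
\[ h(Y)\ge h_{\nu'}(Y)=h_{\nu'}(X)=h(X). \]
By Proposition~\ref{prop::synchronized_bi-balanced_entropy-minimal}, $X$ is entropy minimal, so $Y$ cannot be a proper subshift. Thus $Y=X$ and $\nu'$ is fully supported. Uniqueness then gives $\nu'=\mu$. Since almost every ergodic component of $\nu$ equals $\mu$, we get $\nu=\mu$.

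The main obstacle is the ergodic decomposition step, in particular the fact that entropy integrates over ergodic components. The paper only records affinity for finite convex combinations. The standard reference is the Jacobs theorem on ergodic decompositions (e.g.\ \cite{DGS76}), and I would cite it directly. To avoid the decomposition, I would instead use the Gibbs property: show that an MME $\nu$ that is not fully supported leads to a contradiction by comparing $\nu$ with $\mu$ on cylinders. That comparison seems harder, so I would rely on the decomposition.
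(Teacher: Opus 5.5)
Your proposal is correct and uses the same ingredients as the paper's proof: Kim's fully supported Gibbs MME $\mu$, Thomsen's uniqueness of fully supported MMEs for synchronized shifts, and entropy minimality applied to the support of an MME. The ergodic decomposition you flag as the main obstacle is not needed, because your support argument never uses ergodicity: for any MME $\nu$, its support $Y$ is a closed $\sigma$-invariant set, hence a subshift, with $h(Y)\ge h_\nu(X)=h(X)$, so $Y=X$ and $\nu=\mu$ directly, which is exactly how the paper argues.
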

\begin{proof}
  Let $X$ be a synchronized and bi-balanced shift.
  Then $X$ is entropy minimal and has the unique MME $\mu$ of full support.
  Another MME $\nu$ of $X$, if it exists, must have smaller support than $X$, say $Y\subsetneq X$.
  But, the entropy minimality of $X$ together with the variational principle forces
  \[ h_\nu(Y)\le h(Y)<h(X) \]
  denying that $\nu$ is an MME of $X$.
\end{proof}

Let $\cB$ be a set of words over a finite alphabet $\cA$ and $\cB_n=\cB\cap\cA^n$.
The entropy $h(\cB)$ of $\cB$ is defined by
\[ h(\cB)=\limsup_{n\to\infty}\frac1n\log|\cB_n|.\qquad(-\infty\text{ allowed}.) \]
We say that $\cB$ is \emph{factorial} if, given any word $u$ in $\cB$, all the subwords of $u$ belong to $\cB$ as well.
When $\cB$ is factorial, define a shift space $\sX(\cB)$ by
\[ x\text{ is in }\sX(\cB)\iff\text{every subword of }x\text{ is in }\cB. \]
It was shown in Lemma 2.7 of \cite{CliPav19} that $h(\sX(\cB))=h(\cB)$.

\begin{lemma}\label{lmm::entropy-minimal_bi-balanced_followers_predecessors}
Let $X$ be entropy minimal and bi-balanced.
Fix any $v$ in $\cB(X)$.
Then, there is $N=N(v)$ such that, for every $u$ in $\cB(X)$,
both $\cF_N(u,X)$ and $\cP_N(u,X)$ contain words $s$ and $p$, respectively, in which $v$ occurs as a subword.
\end{lemma}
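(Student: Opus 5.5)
The idea is to compare the followers and predecessors of $u$ with the words that avoid $v$. Entropy minimality forces the $v$-avoiding words to grow at a strictly smaller exponential rate. Bi-balancedness forces $\cF_N(u,X)$ and $\cP_N(u,X)$ to grow at the full rate $h=h(X)$, uniformly in $u$. So for large $N$ the followers and predecessors cannot all avoid $v$.

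First, let $\cB'$ be the set of words in $\cB(X)$ in which $v$ does not occur as a subword. This set is factorial, and $\sX(\cB')\subset X$, since every subword of a point of $\sX(\cB')$ lies in $\cB(X)$. By irreducibility some point of $X$ contains $v$, and such a point is not in $\sX(\cB')$; hence $\sX(\cB')\subsetneq X$. Entropy minimality together with Lemma 2.7 of \cite{CliPav19} then gives
\[ h(\cB')=h(\sX(\cB'))<h. \]
This also covers the degenerate cases where $\sX(\cB')$ is empty or $h(\cB')=-\infty$. Choose $\delta>0$ with $h(\cB')<h-\delta$. Since $h(\cB')$ is a $\limsup$, there is $N_0$ such that $|\cB'_n|\le e^{(h-\delta)n}$ for all $n\ge N_0$.

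Second, a right-balanced shift is BSM \cite{BG15}, so Proposition~\ref{prop::BSM_entropy_condition} gives $|\cB_n(X)|\ge e^{nh}$ for all $n$. Let $c>0$ be a common balancedness constant for both sides. Then for every $u\in\cB(X)$ and every $n\ge1$,
\[ |\cF_n(u,X)|\ge ce^{nh}\quad\text{and}\quad|\cP_n(u,X)|\ge ce^{nh}. \]

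Third, pick $N\ge N_0$ with $ce^{Nh}>e^{(h-\delta)N}$, that is, $e^{\delta N}>1/c$. This choice depends only on $v$, through $\delta$ and $N_0$, and on $c$; it does not depend on $u$. For any $u$ we then have $|\cF_N(u,X)|>|\cB'_N|$, so $\cF_N(u,X)\not\subset\cB'$. Hence some $s\in\cF_N(u,X)$ contains $v$, and the same argument gives $p\in\cP_N(u,X)$ containing $v$.

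The only delicate step is the first one. One has to check that $\sX(\cB')$ really is a proper subshift of $X$, so that entropy minimality applies, and that the $\limsup$ definition of $h(\cB')$ gives an upper bound valid for all large $n$. Once that is in place, the counting comparison is immediate.
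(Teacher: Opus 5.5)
Your proof is correct and takes essentially the paper's route: by entropy minimality and Lemma 2.7 of \cite{CliPav19}, the words avoiding $v$ have entropy strictly below $h(X)$, so for large $N$ they are too few to contain $\cF_N(u,X)$ or $\cP_N(u,X)$, whose sizes are at least $c\,|\cB_N(X)|$. Two steps are harmless but unnecessary: the detour through BSM, since $|\cB_n(X)|\ge e^{nh(X)}$ holds for every shift space by subadditivity of $\log|\cB_n(X)|$ (the bound the paper uses), and the appeal to irreducibility for properness, since any $v\in\cB(X)$ already occurs in some point of $X$.
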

\begin{proof}
As $X$ is bi-balanced, there is $c>0$ with
\[ \frac{|\cF_N(u,X)|}{|\cB_N(X)|},\frac{|\cP_N(u,X)|}{|\cB_N(X)|}>c. \]
Let
\[ D=\{w\in\cB(X)\mid v\text{ is not a subword of }w\}\text{ and }D_n=D\cap\cA^n. \]
Then $D$ is factorial and
\[ \sX(D)=\{x\in X\mid v\text{ does not appear in }x\}\subsetneq X. \]
Consequently, $h(\sX(D))=h(D)<h(X)$ by entropy minimality of $X$.

Choose $\alpha$ with $h(D)<\alpha<h(X)$.
There is some $M>0$ such that for all $n\ge M$ we have
\[ |D_n|<e^{n\alpha}<e^{nh(X)}\le|\cB_n(X)|. \]
Then
\[ \frac{|D_n|}{|\cB_n(X)|}<e^{n(\alpha-h(X))}\to0\]
so there exists $N>0$ with $|D_N|\,/\,|\cB_N(X)|<c$.
Suppose that for a word $u$ in $\cB(X)$ no word in $\cF_N(u,X)$ contains $v$ as a subword.
Immediately, $\cF_N(u,X)\subset D_N$, which induces
a contradiction
\[ c\le\frac{|\cF_N(u,X)|}{|\cB_N(X)|}\le\frac{|D_N|}{|\cB_N(X)|}<c. \]
 It follows that for any $u$ in $\cB(X)$ there is a word in $\cF_N(u,X)$ containing $v$ as its subword.
 Symmetrically, for any $u$ in $\cB(X)$ there is a word in $\cP_N(u,X)$ containing $v$ as its subword.
\end{proof}

\begin{theorem}\label{thm::main_equivalence}
Let $X$ be an irreducible shift space.
Then it is synchronized and bi-balanced if and only if it has variable specification.
\end{theorem}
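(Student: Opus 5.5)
The forward direction is already recorded at the start of Section~\ref{section::equivalence}: variable specification implies bi-balancedness \cite{BG15} and synchronization \cite{Jun11}. So the plan concentrates on the converse. Let $X$ be irreducible, synchronized and bi-balanced. By Proposition~\ref{prop::synchronized_bi-balanced_entropy-minimal}, $X$ is entropy minimal. Lemma~\ref{lmm::entropy-minimal_bi-balanced_followers_predecessors} therefore applies to any fixed word.

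Fix a synchronizing word $v$ of $X$, and let $N=N(v)$ be given by Lemma~\ref{lmm::entropy-minimal_bi-balanced_followers_predecessors}. Given arbitrary $u,w\in\cB(X)$, I choose the following.
\begin{enumerate}
\item A follower $s\in\cF_N(u,X)$ containing $v$, written $s=s_1vs_2$. Then $us_1v\in\cB(X)$.
\item A predecessor $p\in\cP_N(w,X)$ containing $v$, written $p=p_1vp_2$. Then $vp_2w\in\cB(X)$.
\end{enumerate}
Since $v$ is synchronizing, from $us_1v\in\cB(X)$ and $vp_2w\in\cB(X)$ we get
\[ us_1vp_2w\in\cB(X). \]
The bridge from $u$ to $w$ is thus $s_1vp_2$. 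Its length is at most $|s|+|p|-|v|\le 2N-|v|\le 2N$. This bound is independent of $u$ and $w$, which is exactly variable specification with gap $2N$.

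The only point to check is that the pieces are genuinely admissible words. The word $us_1v$ is a prefix of $us\in\cB(X)$, and $vp_2w$ is a suffix of $pw\in\cB(X)$, so both are admissible by factoriality, and the synchronizing property applies directly. The real work — entropy minimality via Kim's Gibbs measure and Thomsen's uniqueness result, and the counting argument guaranteeing $v$ appears within uniformly bounded length — has already been done in Proposition~\ref{prop::synchronized_bi-balanced_entropy-minimal} and Lemma~\ref{lmm::entropy-minimal_bi-balanced_followers_predecessors}. I expect no further obstacle beyond this bookkeeping of indices. Irreducibility is used only implicitly, through the standing hypotheses of those results.
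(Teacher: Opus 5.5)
Your proposal is correct and follows the paper's proof exactly: entropy minimality from Proposition~\ref{prop::synchronized_bi-balanced_entropy-minimal}, then Lemma~\ref{lmm::entropy-minimal_bi-balanced_followers_predecessors} applied to a synchronizing word $v$, then gluing $us_1v$ and $vp_2w$ through $v$ to get a bridge of length at most $2N$ (your $s_1,p_2$ are the paper's $\alpha,\beta$), with the other implication handled by citing \cite{BG15} and \cite{Jun11}. One cosmetic point: relative to the statement as written, the implication you call the ``forward direction'' (variable specification $\Rightarrow$ synchronized and bi-balanced) is the $\Leftarrow$ direction, i.e.\ the converse.
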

\begin{proof}
  $\Leftarrow$): We refer to \cite{BG15} and \cite{Jun11}.
  
  $\Rightarrow$): By Proposition~\ref{prop::synchronized_bi-balanced_entropy-minimal} $X$ is entropy minimal.
  Choose a synchronizing word $v$ of $X$.
  Apply Lemma~\ref{lmm::entropy-minimal_bi-balanced_followers_predecessors} to $v$, obtaining $N=N(v)$.
  Given arbitrary $u,w\in\cB(X)$ choose a word $\alpha v$ in $\cF(u)$ and a word $v\beta$ in $\cP(w)$ with $|\alpha v|,|v\beta|\le N$.
  Since $v$ is synchronizing, we have $u\alpha v\beta w\in\cB(X)$.
  The bridge $\alpha v\beta$ has length at most $2N$, proving variable specification.
\end{proof}

The next corollary is an immediate result of Corollary~\ref{cor::synchronized_bi-balanced_intrinsically-ergodic} applied to Theorem~\ref{thm::main_equivalence}.
\begin{corollary}\label{cor:variable-specification_intrinsic-ergodicity}
  A shift with variable specification is intrinsically ergodic.
\end{corollary}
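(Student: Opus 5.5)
The plan is to chain together results already established. Let $X$ have variable specification. By \cite{Jun11}, as recalled at the start of this section, $X$ is synchronized. By \cite{BG15}, variable specification gives right-balancedness, and the symmetric argument applied to predecessors gives left-balancedness, so $X$ is bi-balanced. We are then exactly in the setting of Corollary~\ref{cor::synchronized_bi-balanced_intrinsically-ergodic}, which gives uniqueness of the measure of maximal entropy.

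It is worth recalling what that corollary rests on, to make sure nothing extra is needed. Bi-balancedness gives, via \cite{KimMK24}, an invariant Gibbs measure $\mu$. This $\mu$ is a fully supported MME, because the Gibbs inequality makes every cylinder have positive measure. By Lemma~\ref{lmm::synchronized_fully-supported_mme} it is the unique fully supported MME, and by the convexity argument in Proposition~\ref{prop::synchronized_bi-balanced_entropy-minimal} it is ergodic. Lemma~\ref{lmm::erogidc_gibbs_entropy-minimal} then gives entropy minimality. Finally, any other MME $\nu$ would be supported on a proper subshift $Y$, and the variational principle applied on $Y$ would give $h_\nu\le h(Y)<h(X)$, a contradiction.

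The only potential obstacle is the step from the support of $\nu$ to $h_\nu\le h(Y)$. This requires that $\operatorname{supp}\nu$ is itself a closed $\sigma$-invariant subset. That holds for any invariant measure, so $\nu$ can be viewed as an element of $\cM(Y)$, and its entropy computed on $Y$ agrees with its entropy computed on $X$. Beyond this the corollary is immediate, and no new estimate is required.
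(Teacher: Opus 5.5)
Your proposal is correct and follows the paper's argument: variable specification gives synchronization \cite{Jun11} and bi-balancedness \cite{BG15} (the $\Leftarrow$ direction of Theorem~\ref{thm::main_equivalence}), and Corollary~\ref{cor::synchronized_bi-balanced_intrinsically-ergodic} then yields intrinsic ergodicity. Your recap of that corollary's proof, and your remark that the support of $\nu$ is a closed invariant proper subshift, are accurate but go beyond what the paper needs.
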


\begin{remark}[Open problem]
Does entropy minimality together with bi-balancedness force synchronization?
An affirmative answer would yield
\[
  \text{entropy minimality}+\text{bi-balancedness}\quad\Longrightarrow\quad\text{variable specification}
\]
\end{remark}

\bibliographystyle{amsplain}
\bibliography{ref}
\end{document}